\numberwithin{equation}{section}
\newtheorem{theorem}{Theorem}[section]
\newtheorem{proposition}[theorem]{Proposition}
\newtheorem{lemma}[theorem]{Lemma}
\newtheorem{corollary}[theorem]{Corollary}
\newtheorem{conjecture}[theorem]{Conjecture}
\newtheorem*{conjecture*}{Conjecture}
\theoremstyle{definition}
\newtheorem{definition}[theorem]{Definition}
\theoremstyle{remark}
\newtheorem{remark}[theorem]{Remark}
\newtheorem*{remark*}{Remark}
\newcommand{\Z}{\mathbb{Z}}
\newcommand{\Q}{\mathbb{Q}}
\newcommand{\et}{\mathrm{\acute{e}t}}
\newcommand{\Gm}{\mathbb{G}_{m}}
\DeclareMathOperator{\Spec}{Spec}
\DeclareMathOperator{\Pic}{Pic}
\DeclareMathOperator{\Lie}{Lie}
\DeclareMathOperator{\rank}{rank}
\DeclareMathOperator{\Br}{Br}
\DeclareMathOperator{\NS}{NS}
\newcommand{\ar}{\mathrm{ar}}
\newcommand{\red}{\mathrm{red}}
\newcommand{\tor}{\mathrm{tor}}
\newcommand{\NT}{\mathrm{NT}}
\newcommand{\sep}{\mathrm{sep}}
\newcommand{\To}{\longrightarrow}
\DeclareMathOperator{\ord}{ord}
\DeclareSymbolFont{cyrletters}{OT2}{wncyr}{m}{n}
\DeclareMathSymbol{\Sha}{\mathalpha}{cyrletters}{"58}
\newcommand*{\isoarrow}[1]{\arrow[#1,"\rotatebox{90}{\(\sim\)}"]}
\title{The conjectures of Artin--Tate and Birch--Swinnerton-Dyer}
\author{Stephen Lichtenbaum}
\address{Department of Mathematics, Brown University, Providence, RI 02912}
\email{stephen\_lichtenbaum@brown.edu}
\author{Niranjan Ramachandran}
\address{Department of Mathematics, University of Maryland, College Park, MD 20742 USA.}
\email{atma@math.umd.edu}
\author{Takashi Suzuki}
\address{Department of Mathematics, Chuo University, 1-13-27 Kasuga, Bunkyo-ku, Tokyo 112-8551, Japan}
\email{tsuzuki@gug.math.chuo-u.ac.jp}
\begin{document}



\maketitle

\begin{prelims}

\DisplayAbstractInEnglish

\bigskip

\DisplayKeyWords

\medskip

\DisplayMSCclass

\end{prelims}


\newpage

\setcounter{tocdepth}{1}

\tableofcontents


\section{Introduction and statement of results}

Let $k=\mathbb F_q$ be a finite field of characteristic $p$ and let $S$ be a smooth projective (geometrically connected) curve over $T=~$Spec$~k$ and let $F = k(S) = \mathbb F_q(S)$ be the function field of $S$. Let $X$ be a smooth proper surface over $T$ with a flat proper morphism $\pi:X \to S$ with smooth geometrically connected generic fiber $X_0$ over Spec~$F$.  The Jacobian $J$ of $X_0$ is an Abelian variety over $F$. 

Our first main result is a proof of the following statement conjectured by Artin and Tate \cite[Conjecture~(d)]{Tate1966}:
\begin{theorem}\label{main} The Artin--Tate conjecture for $X$ is equivalent to the Birch--Swinnerton-Dyer conjecture  for $J$.
\end{theorem}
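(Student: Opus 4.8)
The plan is to prove the equivalence factor by factor, using the fibration $\pi\colon X\to S$ to split $\zeta(X,s)$ into a product in which the only non‑elementary term is, up to explicit corrections, the Hasse--Weil $L$‑function $L(J/F,s)$. Each of the two conjectures has a \emph{rank part} (the order of the pole of $\zeta(X,s)$ at $s=1$ equals $\rho(X):=\rank\NS(X)$, resp.\ the order of vanishing of $L(J/F,s)$ at $s=1$ equals $\rank J(F)$) and a \emph{special‑value part} (the corresponding leading coefficient equals a prescribed arithmetic expression, which I denote $\mathrm{AT}(X)$, resp.\ $\mathrm{BSD}(J)$, and which includes finiteness of $\Br(X)$, resp.\ $\Sha(J/F)$). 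I would first show that the rank parts are equivalent, and then that, granting them, the special‑value parts are equivalent.

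\emph{Step 1: the factorization.} Let $U\subseteq S$ be the open locus of good reduction, $j\colon U\hookrightarrow S$, and $\mathcal F$ the lisse $\Q_\ell$‑sheaf on $U$ attached to $J$, so that $R^1\pi_*\Q_\ell|_U\cong\mathcal F$ up to a twist. Since $\pi$ is proper with geometrically connected fibres, the Leray spectral sequence degenerates after $\otimes\Q_\ell$ and, read off on characteristic polynomials of geometric Frobenius, yields an identity of the shape
\[
 \zeta(X,s)=\frac{P_1(X,q^{-s})\,P_3(X,q^{-s})}{(1-q^{-s})(1-q^{2-s})\,P_2(X,q^{-s})},\qquad P_2(X,q^{-s})=(1-q^{1-s})^{e}\,N(J/F,s)\,u(s),
\]
where $N(J/F,s)=\det\!\bigl(1-\mathrm{Frob}_q\,q^{-s}\mid H^1(S_{\overline k},j_*\mathcal F)\bigr)$ is the numerator of $L(J/F,s)$, the integer $e=2+\sum_v(m_v-1)$ (with $m_v$ the number of components of $X_v$) records the classes of a (multi)section, of a fibre, and of the non‑identity fibre components, and $u(s)$ gathers the skyscraper parts of $R^1\pi_*\Q_\ell$ at the bad fibres together with the bad Euler factors and the denominator of $L(J/F,s)$. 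By Deligne's purity all of $P_1$, $P_3$, $(1-q^{-s})$, $(1-q^{2-s})$ and $u(s)$ are units at $s=1$; and since the intermediate extension of a weight‑$1$ sheaf on a curve is pure of weight $2$ in degree $1$, the generalized $q$‑eigenspace of Frobenius on $H^1(S_{\overline k},j_*\mathcal F)$ accounts for \emph{all} of $\ord_{s=1}L(J/F,s)$, with no spurious trivial zeros.

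\emph{Step 2: the rank parts.} By Step 1, $\zeta(X,s)$ has a pole at $s=1$ of order exactly $e+\ord_{s=1}L(J/F,s)$. On the geometric side the Shioda--Tate exact sequence — coming from the relative Picard functor and exhibiting $\NS(X)$, up to torsion, as an extension of the Mordell--Weil group $J(F)$ by the lattice spanned by a section, a fibre, and the non‑identity fibre components — gives $\rho(X)=e+\rank J(F)$. Comparing, the pole of $\zeta(X,s)$ at $s=1$ has order $\rho(X)$ if and only if $\ord_{s=1}L(J/F,s)=\rank J(F)$: the rank part of Artin--Tate for $X$ is unconditionally equivalent to the rank part of Birch--Swinnerton-Dyer for $J$.

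\emph{Step 3: the special‑value parts, and the byproduct.} Granting the rank parts, Step 1 shows that the leading coefficient of $\zeta(X,s)$ at $s=1$ equals that of $L(J/F,s)$ times an explicit constant $C_0$ (a power of $\log q$, together with the values at $s=1$ of $u(s)$, $P_1$, $P_3$, $(1-q^{-s})$ and $(1-q^{2-s})$, all computable from bad‑fibre data and the $F/k$‑trace of $J$). Hence the two special‑value assertions are equivalent precisely to the arithmetic identity $\mathrm{AT}(X)=C_0\cdot\mathrm{BSD}(J)$, which I would verify by matching the constituents of the two formulas: (i) the discriminant of the intersection pairing on $\NS(X)/\mathrm{tors}$ equals the N\'eron--Tate regulator of $J(F)$, times the determinant of the section‑and‑fibre block (a $\pm1$ up to $\chi(X,\mathcal O_X)$), times, at each bad $v$, the discriminant of the fibre‑component lattice modulo the fibre, which is the local Tamagawa number $c_v$ of $\mathrm{BSD}(J)$ — using under Shioda--Tate the identification of minus the intersection pairing on the Mordell--Weil part with the canonical height, plus a N\'eron‑model computation one bad fibre at a time; (ii) the torsion terms agree, via the principal polarization $J\cong J^\vee$ and the comparison of $\NS(X)_{\mathrm{tor}}$ with $J(F)_{\mathrm{tor}}$; (iii) the power of $q$ in $\mathrm{AT}(X)$, namely $\alpha(X)=\chi(X,\mathcal O_X)-1+\dim\Pic^0_X$, matches the period term of $\mathrm{BSD}(J)$, via Leray for $\mathcal O_X$, Grothendieck--Riemann--Roch, and the degree of the Hodge bundle $\pi_*\omega_{X/S}$ relative to the N\'eron model; (iv) the remaining factor matches $|\Br(X)|$ against $|\Sha(J/F)|$ — this can be imported from Geisser's theorem, or, as the advertised byproduct, extracted directly from the Leray spectral sequence for $\Gm$ along $\pi$ (the Artin--Grothendieck exact sequence linking $\Br(X)$, $\Sha(J/F)$ and a product of local terms), after which $\mathrm{AT}(X)=C_0\cdot\mathrm{BSD}(J)$ holds and the theorem follows; a second, more conceptual route runs the whole comparison inside Weil‑\'etale cohomology, where both special values are Euler characteristics and $|\Br(X)|$ versus $|\Sha(J/F)|$ drops out of the $H^2$‑terms of a single exact sequence. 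The main obstacle is this Step 3: not any single comparison but the simultaneous bookkeeping, exact on the nose and including the $p$‑part, of every correction factor — the $c_v$ entering once through $\NS(X)$‑discriminants and once through the N\'eron model, the discrepancy between $\NS(X)_{\mathrm{tor}}$ and $J(F)_{\mathrm{tor}}$, the skyscraper terms $u(s)$ at bad fibres, and the exact power of $q$ relating the coherent cohomology of $X$ to the N\'eron model of $J$ — so that the elementary constant $C_0$ of Step 1 equals $\mathrm{AT}(X)/\mathrm{BSD}(J)$ identically.
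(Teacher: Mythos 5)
Your plan follows essentially the same route as the paper's first proof: factor $\zeta(X,s)$ along the fibration so that $P_2(X,q^{-s})$ differs from $L(J,s)$ by $(1-q^{1-s})^{2}$, trace factors coming from the $F/k$-trace $B$, and an elementary bad-fibre product $Q_2(s)$; deduce the equivalence of the rank statements from the Tate--Shioda relation; then match the two special-value formulas term by term. Your Steps 1 and 2 are correct and correspond to (\ref{eq9}), Corollary \ref{shioda} and Proposition \ref{prop2}.

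The gap is in Step 3, which you rightly identify as the main obstacle but do not carry out, and two of the identifications you propose there are false as stated. First, the discriminant of the fibre-component lattice $R_v$ is \emph{not} the Tamagawa number $c_v$: the correct identity is (\ref{rm3}), namely $\Delta_{\ar}(R_v)=\frac{c_v}{\delta_v\,\delta'_v}(\log q_v)^{m_v-1}\prod_{i\in G_v}r_i$, which involves the local index $\delta_v$ and period $\delta'_v$ of $X\times F_v$ and the number $r_i$ of geometric components of each $k(v)$-component; your version holds only when $\pi$ has a section and the components are geometrically irreducible. Second, the leftover local factors $\prod_v\delta_v\delta'_v$, together with the global index $\delta$ and the cokernel order $\alpha$ of $\Pic^0(X_0)\hookrightarrow J(F)$ (both of which enter $\Delta_{\ar}(\NS(X))$ through the Yun-type Lemma \ref{yun-lemma} and Lemma \ref{product}, giving the factor $\alpha^2\delta^2$ in (\ref{rm4})), must cancel against the ratio $[\Br(X)]/[\Sha(J/F)]$. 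That cancellation is exactly Geisser's relation (\ref{eqn-g}), $[\Br(X)]\,\alpha^2\delta^2=[\Sha(J/F)]\prod_v\delta'_v\delta_v$, and it does \emph{not} ``drop out'' of the Leray spectral sequence for $\Gm$: the Artin--Grothendieck argument yields only the equivalence of finiteness (and an isomorphism $\Br(X)\cong\Sha(J/F)$ under strong hypotheses such as the existence of a section), while the naive equality of orders is false in general --- the missing correction factors are precisely the point that had to be repaired in the Liu--Lorenzini--Raynaud argument. So either you quote Theorem \ref{geisser} as an external input (which is what the paper's first proof does), or you must prove (\ref{eqn-g}) independently; the paper's second, Weil-\'etale proof achieves the latter implicitly by never separating the terms. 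As written, your Step 3 assumes the conclusion of its hardest ingredient.
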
  
Recall that these conjectures concern two (conjecturally finite) groups: the Tate--Shafarevich group $\Sha(J/F)$ of $J$ and the Brauer group $\textrm{Br}(X)$ of $X$. A result of Artin--Grothendieck \cite[Theorem 2.3]{MR528839} \cite[\S 4]{MR244271} is that $\Sha(J/F)$ is finite if and only if $\textrm{Br}(X)$ is finite. 

Our second main result is a new proof of a beautiful result   (\ref{eqn-g}) of Geisser  \cite[Theorem 1.1]{Geisser1} that relates the conjectural finite orders of $\Sha(J/F)$ and $\textrm{Br}(X)$; special cases of  (\ref{eqn-g}) are due to Milne--Gonzales-Aviles  \cite{MR656050, MR1987138}.  

We actually provide two proofs of Theorem \ref{main}; while our first proof  uses  Geisser's result (\ref{eqn-g}), the second (and very short) proof  in \S \ref{suzuki}, completely due to the third-named author, does not.

\subsection{History}  Artin and Tate regarded Theorem \ref{main}  as easier to prove as opposed to the other conjectures  in \cite{Tate1966}. They proved Theorem \ref{main} when $\pi$ is smooth and has a section (\cite[p.427]{Tate1966}) using the equality
\begin{equation}\label{artin} [\Sha(J/F)] = [ \textrm{Br}(X) ]\end{equation} between the orders of the groups $\Sha(J/F)$ and $\textrm{Br}(X)$ which follows from Artin's theorem \cite[Theorem 3.1]{Tate1966}, \cite[Theorem 2.3]{MR528839}: if $\pi$ is generically smooth with connected fibers and admits a section, then $\Sha(J/F) \cong \textrm{Br}(X)$.  Gordon \cite[Theorem 6.1]{MR528839} used (\ref{artin}) to prove Theorem \ref{main} when\footnote{There is another proof (up to $p$-torsion) in this case due to Z.~Yun \cite{Yun}.}  $\pi$ is cohomologically flat with a section (see \cite[Theorem 2.3]{MR528839}). Building on Gordon \cite{MR528839}, Liu--Lorenzini--Raynaud \cite{MR2092767} proved several new cases of Theorem \ref{main} by eliminating the condition of cohomological flatness of $\pi$; their proof  \cite[Theorem 4.3]{MR2092767} proceeds by proving that Theorem \ref{main} is equivalent to a precise relation generalizing (\ref{artin}) between $[\textrm{Br}(X)]$ and $[\Sha(J/F)]$ which in their case had been proved by Milne and Gonzales-Aviles  \cite{MR656050, MR1987138}.  

 As Liu--Lorenzini--Raynaud  (and \href{https://www.jmilne.org/math/articles/1975a.html}{Milne}) point out   \cite[Theorem 2]{MR2125738}, Theorem \ref{main} follows by combining \cite{Tate1966, MR244271, MR0414558, MR2000469}: 
\[AT(X)  \xLeftrightarrow{\ \text{Artin--Tate--Milne}\ } \textrm{Br}(X)~\textrm{finite} \xLeftrightarrow{\ \text{Artin--Grothendieck}\ } \Sha(J/F)~\textrm{finite} \xLeftrightarrow{\ \text{Kato--Trihan}\ } BSD(J).\]  
 In 2018, Geisser pointed out that a slight correction is necessary in the relation \cite[Theorem 4.3]{MR2092767} between $[\textrm{Br}(X)]$ and $[\Sha(J/F)]$; Liu--Lorenzini--Raynaud \cite[Corrected Theorem 4.3]{MR3858404} showed that Theorem \ref{main} holds if and only if this slightly corrected version holds.  This precise relation (Theorem \ref{geisser})  was then proved by Geisser  \cite[Theorem 1.1]{Geisser1} without using Theorem \ref{main}. Thus, combining \cite[Corrected Theorem 4.3]{MR3858404} and  \cite[Theorem 1.1]{Geisser1} gives the second known proof of Theorem \ref{main}.  But this proof relies heavily on the work of Gordon\footnote{Known to have several inaccuracies; see \cite[\S 3.3]{MR3858404}.} \cite{MR528839}  as can be seen from \cite[\S 3, (3.9)]{MR3858404}.

\subsection{Our approach} Our first proof depends on \cite{MR528839} only for the elementary result (\ref{newone}).  As in  \cite{MR528839, MR2092767, MR3858404}, this proof also  follows the strategy in \cite[\S 4]{Tate1966}.  We use the localization sequence to record a short proof\footnote{This is similar to the ideas of Hindry--Pacheco and Kahn in \cite[\S\S3.2-3.3]{MR2562456}.} of the Tate--Shioda relation (Corollary \ref{shioda}).  In turn, this gives a quick calculation (\ref{rm4}) of the height pairing $\Delta_{\ar}(\NS(X))$ on the N\'eron--Severi group of $X$. The same calculation in \cite{MR528839, MR3858404} requires a detailed analysis of various subgroups of $\NS(X)$. A beautiful introduction to these results is \cite{MR3586808}; see  \cite{SL1983, MR2135283, MR4032302} for Weil-\'etale analogues. 

The second proof (\S \ref{suzuki}) of Theorem \ref{main} uses only (\ref{raynaud})
and the Weil-\'etale formulations of the two conjectures. 
In this proof, we do not compare each term of the two special value formulas
and entirely work in derived categories.

\subsection*{Notations} Throughout, $k=\mathbb F_q$ is a finite field of characteristic $p$ and $T =\mathrm{Spec}~k$; if $\bar{k}$ is an algebraic closure of $k$, let $\bar{T} = \mathrm{Spec}~\bar{k}$. The function field of $S$ is $F = k(S)$.  Let $X$ be a smooth proper surface over $T$ with a flat proper morphism $\pi:X \to S$ with smooth geometrically connected generic fiber $X_0$ over Spec~$F$.  The Jacobian $J$ of $X_0$ is an Abelian variety over $F$. 

\subsection{The Artin--Tate conjecture} Let $k=\mathbb F_q$ and $F =k(S)$.  For any scheme $V$ of finite type over $T$, the zeta function $\zeta(V,s)$ is defined as \[ \zeta(V,s) = \prod_{v \in V} \frac{1}{(1-q_v^{-s})};\]the product is over all closed points $v$ of $V$ and $q_v$ is the size of the finite residue field $k(v)$ of $v$. If $V$ is smooth proper (geometrically connected) of dimension $d$, then the zeta function $\zeta(V,s)$ factorizes as 
 \[\zeta(V,s) = \frac{P_1(V,q^{-s}) \cdots P_{2d-1}(V, q^{-s})}{P_0(V,q^{-s}) \cdots P_{2d}(V, q^{-s})},\quad P_0 = (1-q^{-s}),\quad P_{2d} = (1-q^{d-s}),\]
where $P_i(V,t)\in \mathbb Z[t]$ is the characteristic polynomial of Frobenius acting on the $\ell$-adic \'etale cohomology $H^i(V \times_{T} \bar{T}, \mathbb Q_{\ell})$ for any prime $\ell$ not dividing $q$; by Grothendieck and Deligne,  $P_j(V,t)$ is independent of  $\ell$. One has the factorization \cite[(4.1)]{Tate1966}  (the second equality uses Poincar\'e duality)
\begin{equation}\label{zetaX} \zeta(X,s) = \frac{P_1(X,q^{-s})\cdot P_3(X,q^{-s})}{(1-q^{-s})\cdot P_2(X,q^{-s})\cdot(1-q^{2-s})} = \frac{P_1(X,q^{-s})\cdot P_1(X,q^{1-s})}{(1-q^{-s})\cdot  P_2(X,q^{-s})\cdot(1-q^{2-s})}. \end{equation} 
Let $\rho(X)$ be the rank of the finitely generated N\'eron--Severi group $\NS(X)$.  The intersection $D\cdot E$ of divisors $D$ and $E$ provides a  symmetric non-degenerate bilinear pairing on $\NS(X)$; the height pairing $ \langle D,E\rangle_{\ar} $ \cite[Remark 3.11]{MR3858404} on $\NS(X)$ is related to the intersection pairing as follows:
\[\NS(X) \times \NS(X) \to  \mathbb Q(\log q), \qquad D,E\mapsto  \langle D,E\rangle_{\ar} = (D\cdot E) \log q.\]  
 Let $A$ be the reduced identity component $\Pic^{\red, 0}_{X/k}$ of the Picard scheme $\Pic_{X/k}$ of $X$.  Let 
 \begin{equation}\label{alphax} \alpha(X) = \chi(X, \mathcal O_X) -1+\dim(A).\end{equation} We write $[G]$ for the order of a finite group $G$. 
\begin{conjecture}[Artin--Tate \protect{\cite[Conjecture (C)]{Tate1966}}]\label{AT}
The Brauer group $\Br(X)$ is finite, $\ord_{s=1} P_2(X, q^{-s}) = \rho(X)$, 
and the special value  \[P^*_2(X, q^{-1}) : = \lim_{s\to 1} \frac{P_2(X, q^{-s})}{(s-1)^{\rho(X)}}\] of $P_2(X,t)$ at $t=1/q$ $($this corresponds to $s=1)$ satisfies 
\begin{equation}\label{eqn1} P^*_2(X, q^{-1}) = [\Br(X)]\cdot\Delta_{\ar}(\NS(X))\cdot q^{-\alpha(X)}.\end{equation} 
Here $\Delta_{\ar}(\NS(X))$ is the discriminant $($see \S \ref{pairings}\,$)$ of the height pairing on $\NS(X)$.
\end{conjecture}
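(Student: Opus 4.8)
The plan is to follow the strategy of \cite[\S 4]{Tate1966}, establishing in turn the three assertions of the conjecture; throughout write $Y := X \times_T \bar T$ and let $G_k = \mathrm{Gal}(\bar k/k)$. The heart of the matter is the finiteness of $\Br(X)$. From the Kummer sequence on the \'etale site of $X$, the Hochschild--Serre spectral sequence for $X \to T$ (using $\mathrm{cd}_\ell(k) = 1$ for $\ell \neq p$), and the consequent description of $\Br(X)(\ell)$ in terms of the cokernel of the cycle-class map
\[ c_\ell \colon \NS(X) \otimes \Z_\ell \To H^2_{\et}(Y, \Z_\ell(1))^{G_k}, \]
Tate's argument shows that $\Br(X)(\ell)$ is finite if and only if $\ord_{s=1} P_2(X, q^{-s}) = \rho(X)$ --- equivalently, the generalized $q$-eigenspace of Frobenius on $H^2_{\et}(Y, \Q_\ell)$ is spanned by algebraic classes and Frobenius acts on it semisimply. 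This is the Tate conjecture for divisors on $X$, together with its $p$-adic variant via flat and de Rham--Witt cohomology. I would verify it in the cases where it is available (for instance when $X$ is dominated by a product of curves, or when $X$ is a K3 or abelian surface), and in general I would flag its proof as the principal obstacle; I note that, by Theorem~\ref{main}, the Artin--Grothendieck comparison quoted above, and the theorem of Kato--Trihan, this finiteness is equivalent to that of $\Sha(J/F)$, so an alternative route is to prove the Birch--Swinnerton-Dyer conjecture for $J$ directly.

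Granting that $\Br(X)$ is finite, the order-of-pole statement is part of the previous paragraph, and it remains to deduce the special-value identity~(\ref{eqn1}). Here I would argue cohomologically in the manner of Milne: writing $P_2(X, t) = \det(1 - t\,\Phi \mid H^2_{\et}(Y, \Q_\ell))$, the special value $P_2^*(X, q^{-1})$ factors as $\det(1 - q^{-1}\Phi)$ on a Frobenius-stable complement of the $q$-eigenspace --- a rational number whose $\ell$-adic valuation encodes $[\Br(X)(\ell)]$ and the discriminant of the intersection pairing on the image of $c_\ell$ --- times a power of $q$ governed by the Hodge, respectively de Rham--Witt, numbers of $X$, which accounts for the factor $q^{-\alpha(X)}$ with $\alpha(X)$ as in~(\ref{alphax}); the $p$-part of this last comparison uses the crystalline computations of Milne and Illusie. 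Running this for every $\ell$ and for $\ell = p$ reduces~(\ref{eqn1}) to the single remaining point: the evaluation of $\Delta_{\ar}(\NS(X))$.

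For that I would use the localization (Shioda--Tate) sequence recorded in the excerpt as Corollary~\ref{shioda}: up to finite groups, $\NS(X)$ is an extension of $J(F)/\mathrm{tors}$ by the ``trivial lattice'' generated by a horizontal divisor together with the vertical components of the fibres of $\pi$. After tensoring with $\Q$, the intersection pairing is block-diagonal for this decomposition; on the trivial part it is the hyperbolic pairing of a section against a fibre together with the fibral intersection matrices, whose determinants are exactly the local factors appearing in Birch--Swinnerton-Dyer, while on $J(F)$ it is a rescaling of the N\'eron--Tate height pairing. This yields the computation~(\ref{rm4}) of $\Delta_{\ar}(\NS(X))$ as the Birch--Swinnerton-Dyer regulator times the local factors times an explicit power of $q$; matching it against the cohomological side of the previous paragraph yields~(\ref{eqn1}) and completes the proof. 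The main obstacle is, as noted, the Tate conjecture for $X$ --- equivalently, the finiteness of $\Sha(J/F)$; everything beyond that point is a bookkeeping of lattices and a matching of powers of $q$, which is exactly what the methods of the present paper, via Corollary~\ref{shioda} and the Weil-\'etale reformulation, are designed to streamline.
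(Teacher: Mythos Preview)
The statement you are attempting to prove is Conjecture~\ref{AT}, the Artin--Tate conjecture itself. The paper does \emph{not} prove this conjecture: it is stated as an open problem, and the paper's contribution (Theorem~\ref{main}) is only that Conjecture~\ref{AT} is \emph{equivalent} to Conjecture~\ref{BSD}. There is therefore no ``paper's own proof'' to compare against, and your proposal is aimed at the wrong target. What you have sketched in your first two paragraphs is essentially the theorem of Milne \cite{MR0414558}: finiteness of $\Br(X)$ implies the full statement of Conjecture~\ref{AT}, including the special-value formula~(\ref{eqn1}). That implication is already known and is quoted in the paper's \S1.1 as one link in the chain $\mathrm{AT}(X) \Leftrightarrow \Br(X)$ finite. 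You correctly identify the finiteness of $\Br(X)$ --- equivalently the Tate conjecture for divisors on $X$ --- as the principal obstacle, and it remains open in general; so your ``proof'' is, as you yourself concede, conditional.

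Your third paragraph is also misplaced within your own strategy. If you are running Milne's cohomological argument --- computing $P_2^*(X, q^{-1})$ prime by prime in terms of $[\Br(X)]$, the discriminant of the intersection form on $\NS(X)$, and Hodge/crystalline numbers --- then (\ref{eqn1}) falls out directly, with $\Delta_{\ar}(\NS(X))$ appearing as is; there is no need to decompose $\NS(X)$ via Shioda--Tate or to bring in the N\'eron--Tate regulator on $J(F)$. The identity~(\ref{rm4}) that you invoke is used in the paper to compare the AT and BSD formulas, i.e.\ to prove Theorem~\ref{main}, not to prove Conjecture~\ref{AT} on its own. Inserting it into a direct attack on~(\ref{eqn1}) is at best redundant and at worst circular, since it expresses $\Delta_{\ar}(\NS(X))$ in terms of BSD quantities that have no independent role in the statement of Conjecture~\ref{AT}.
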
   

\begin{remark*} The discriminant $\Delta_{\ar}(\NS(X))$ of the height pairing on $\NS(X)$ is related to the discriminant $\Delta(\NS(X))$ of the intersection pairing as follows: $\Delta_{\ar}(\NS(X)) = \Delta(\NS(X))\cdot(\log q)^{\rho(X)}$. 
\end{remark*} 
\subsection{Discriminants}\label{pairings}
For more details on the basic notions recalled next, see~\cite[\S 2.8]{Yun} and~\cite{MR899399}. Let $N$ be a finitely generated Abelian group $N$ and  let $\psi: N \times N \to K$ be a symmetric bilinear form with values in any field $K$ of characteristic zero. If $\psi: N/{\tor} \times N/{\tor} \to K$ is non-degenerate, the discriminant $\Delta(N)$  is defined as the determinant of the matrix $\psi(b_i, b_j)$ divided by  $(N: N')^2$ where $N'$ is the subgroup of finite index generated by a maximal linearly independent subset $\{b_i\}$ of $N$. Note that $\Delta(N)$ is independent of the choice of the subset $\{b_i\}$ and the subgroup $N'$ and incorporates the order of the torsion subgroup of $N$. For us, $K = \mathbb Q$ or $\mathbb Q(\log q)$.
 
 Given a short exact sequence  $ 0 \to N' \to N \to N'' \to 0$
which splits over $\mathbb Q$ as an orthogonal direct sum $N_{\mathbb Q} \cong N'_{\mathbb Q} \oplus N''_{\mathbb Q}$
 with respect to a definite pairing $\psi$ on $N$, one has the following standard relation
\begin{equation}\label{deltann} \Delta(N) = \Delta(N')\cdot\Delta(N'').
\end{equation} 
 Given a map $f: C \to C'$ of Abelian groups with finite kernel and cokernel,  the invariant  $ z(f) = \frac{[\textrm{Ker}(f)]}{[\textrm{Coker}(f)]}$ \cite{Tate1966} extends to the derived category $\mathcal D$ of complexes in Abelian groups with bounded and finite homology: given any such complex $C_{\bullet}$, the invariant
 \[z(C_{\bullet}) = \prod_i[H_i(C_{\bullet})]^{(-1)^i}\]
 is an Euler characteristic; for any triangle $K \to L \to M \to K[1]$ in $\mathcal D$, the following relation holds
 \begin{equation}\label{euler-z}z(K)\cdot z(M) = z(L).\end{equation}
 One recovers $z(f)$ viewing $f:C \to C'$ as a complex in degrees zero and one. For any pairing $\psi: N \times N \to \mathbb Z$, the induced map $N \to R\textrm{Hom}(N,\mathbb Z)$ recovers $\Delta(N)$ above:
 \[\Delta(N) = z(N \to R\textrm{Hom}(N,\mathbb Z))^{-1}.\]
\qed

\subsection{The Birch--Swinnerton-Dyer conjecture}

For more details on the basic notions recalled next, see \cite{MR4032302}. Let $J$ be the Jacobian of $X_0$. Recall that the complete L-function \cite{SDPP_1969-1970__11_2_A4_0, MR330174}, \cite[\S 4]{MR4032302} of $J$ is defined as a product of local factors 
\begin{equation} L(J,s) = \prod_{v\in S} \frac{1}{L_v(J, q_v^{-s})}.
\end{equation} For any closed point $v$ of $S$, the local factor $L_v(J,t)$ is the characteristic polynomial of Frobenius on
  \begin{equation}\label{rm7} H^1_{\et}(J \times F_v^{\sep}, \mathbb Q_{\ell})^{I_v},\end{equation}
 where $F_v$ is the complete local field corresponding to $v$ and $I_v$ is the inertia group at $v$. By \cite[Proposition~4.1]{MR4032302}, $L_v(J,t)$ has coefficients in $\mathbb Z$ and is independent  of $\ell$, for any prime $\ell$ distinct from the characteristic of $k$. Let $\Sha(J/F)$ be the Tate--Shafarevich group of $J$ over Spec~$F$ and let $r$ be the rank of the finitely generated group $J(F)$. Let $\Delta_{\NT}(J(F))$ be the discriminant of the N\'eron--Tate pairing \cite[p.~419]{Tate1966}, \cite[\S 1.5]{MR2000469} on $J(F)$:
 \begin{equation}\label{NTdefinition}
 J(F) \times J(F) \to \mathbb Q(\log~q), \quad (\gamma, \kappa) \mapsto \langle \gamma, \kappa\rangle_{\NT}.
 \end{equation}   Let $\mathcal J \to S$ be the N\'eron model of $J$; for any closed point $v \in S$, define $c_v =[\Phi_v(k_v)]$ where $\Phi_v$ is the group of connected components of $\mathcal J_v$ and put $c(J) = \prod_{v \in S} c_v$; this is a finite product as $c_v=1$ for all but finitely many $v$.  
Let $\Lie\,\mathcal J$ be the locally free sheaf on $S$ defined by the  Lie algebra of $\mathcal J$. 
Recall the\footnote{By \cite[Corollary 4.5]{MR4032302}, this is equivalent to the formulation in \cite{Tate1966}.}
 \begin{conjecture}[Birch--Swinnerton-Dyer]\label{BSD} The group $\Sha(J/F)$ is finite, $\ord_{s=1}L(J,s) =r$, and the special value 
 \[L^*(J,1):=  \lim_{s\to 1} \frac{L(J,s)}{(s-1)^r}\] satisfies
 \begin{equation}\label{eqn2}L^*(J,1)  = [\Sha(J/F)]\cdot{\Delta_{\NT}(J(F))}\cdot c(J)\cdot q^{\chi(S, \Lie\,\mathcal J)}.\end{equation}
 \end{conjecture}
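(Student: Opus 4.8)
The plan is not to attack the special-value identity (\ref{eqn2}) directly, but to reduce the whole of Conjecture \ref{BSD} --- the finiteness of $\Sha(J/F)$, the equality $\ord_{s=1}L(J,s)=r$, and the exact formula (\ref{eqn2}) with all of its arithmetic factors $[\Sha(J/F)]$, $\Delta_{\NT}(J(F))$, $c(J)$, $q^{\chi(S,\Lie\,\mathcal J)}$ --- to the single assertion that $\Sha(J/F)$ is finite. That reduction is the theorem of Kato--Trihan \cite{MR2000469}: for an abelian variety over the global function field $F$, finiteness of the $\ell$-primary part of $\Sha$ for just one prime $\ell$ (including $\ell=p$) already forces the full Birch--Swinnerton-Dyer formula. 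Since Conjecture \ref{BSD} in particular asserts this finiteness, the two statements are equivalent, and on this route the delicate comparison of discriminants, component groups and Euler characteristics --- and with it Geisser's relation --- never has to be carried out by hand.

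Next I would pass from $J$ to the surface $X$. Because $J$ is the Jacobian of the generic fibre $X_0$ of the fibration $\pi\colon X\to S$ with $X$ a smooth proper surface over $T$, the theorem of Artin--Grothendieck recalled above (\cite[Theorem 2.3]{MR528839}, \cite[\S 4]{MR244271}) identifies the two finiteness problems: $\Sha(J/F)$ is finite if and only if $\Br(X)$ is finite, $\ell$-primary part by $\ell$-primary part. So the goal becomes: $\Br(X)$ is finite. For $\ell\neq p$ this is, by Tate's analysis \cite{Tate1966}, equivalent to the Tate conjecture for divisors on the surface $X$ over $\mathbb F_q$ --- and hence also to the order-of-vanishing statement $\ord_{s=1}P_2(X,q^{-s})=\rho(X)$ of Conjecture \ref{AT} --- while the $p$-primary part is controlled by the flat/crystalline version of the Tate conjecture, whose equivalence with finiteness of $\Br(X)[p^{\infty}]$ is likewise known. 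Thus in every case in which the Tate conjecture for $X$ is available --- $X$ birational to a product of two curves, $X$ a K3 or abelian surface or a cover or quotient of one, $X$ of geometric genus zero, and so on --- one gets $\Br(X)$ finite, hence $\Sha(J/F)$ finite, hence by the Kato--Trihan reduction Conjecture \ref{BSD} in full for $J$. (Equivalently, by Theorem \ref{main} it would suffice to establish Conjecture \ref{AT} for $X$, and by the Artin--Tate--Milne equivalence recalled in the introduction that again amounts to finiteness of $\Br(X)$ --- the same condition.)

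The one genuinely open ingredient --- and hence the main obstacle --- is therefore the Tate conjecture for divisors on the fibered surface $X$, equivalently the finiteness of $\Br(X)$, equivalently the finiteness of $\Sha(J/F)$. Everything else needed for Conjecture \ref{BSD} in this setting, namely the passage from $J$ to $X$ and the promotion of bare finiteness to the full special-value formula with its factors $c(J)$, $\Delta_{\NT}(J(F))$ and $q^{\chi(S,\Lie\,\mathcal J)}$, is already supplied by the theorems of Artin--Grothendieck and Kato--Trihan, so Conjecture \ref{BSD} is, in this precise sense, equivalent to that single outstanding geometric input.
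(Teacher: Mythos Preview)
The statement you are addressing is a \emph{conjecture}, not a theorem: the paper states Conjecture~\ref{BSD} but nowhere claims to prove it. There is thus no ``paper's own proof'' to compare your attempt against. What the paper does prove is Theorem~\ref{main}, the \emph{equivalence} of Conjecture~\ref{BSD} with Conjecture~\ref{AT}; neither conjecture is established unconditionally.

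Your write-up is not a proof of Conjecture~\ref{BSD} either, and you say so yourself: you correctly reduce the full BSD statement to the finiteness of $\Sha(J/F)$ via Kato--Trihan, then to finiteness of $\Br(X)$ via Artin--Grothendieck, then to the Tate conjecture for divisors on $X$ --- and then you stop, noting that this last input is ``genuinely open''. That chain of equivalences is accurate and is exactly the one the paper recalls in its introduction (the display $AT(X)\Leftrightarrow\Br(X)$ finite $\Leftrightarrow\Sha(J/F)$ finite $\Leftrightarrow BSD(J)$). But a reduction to an open problem is not a proof; what you have written is a correct summary of the known logical status of the conjecture, not a demonstration of it.

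If your intent was instead to prove Theorem~\ref{main} (the equivalence $AT(X)\Leftrightarrow BSD(J)$), note that your argument does yield this, but only by invoking the full strength of Kato--Trihan \cite{MR2000469} and of Artin--Tate--Milne \cite{MR0414558}. The paper's point is precisely to give two proofs of Theorem~\ref{main} that \emph{avoid} Kato--Trihan: one via a direct term-by-term comparison of the special-value formulas (\S\ref{core}), and one via Weil-\'etale cohomology (\S\ref{suzuki}). Your route is the ``already known'' one that the paper mentions and then sets out to bypass.
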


 The proof of Theorem \ref{main}, \emph{i.e.} the equivalence of Conjectures \ref{AT} and \ref{BSD}, naturally divides into four parts: 
 
 \begin{itemize}
 \item $\Br(X)$ is finite if and only if $\Sha(J/F)$ is finite. This is known \cite[(4.41), Corollaire (4.4)]{MR244271}. 
\item  Comparison of $\chi(S, \Lie\,\mathcal J)$ and $\alpha(X)$ given in (\ref{raynaud}). This is known \cite[p.~483]{MR2092767}. For the convenience of the reader, we recall it  in   \S \ref{comparison}.
\item (Proposition \ref{prop2}) $\ord_{s=1} P_2(X, q^{-s}) = \rho(X)$ if and only if $\ord_{s=1}L(J,s) =r$. 
 \item (\S \ref{core}) $P_2^*(X, 1)$ satisfies (\ref{eqn1}) if and only if $L^*(J,1)$ satisfies (\ref{eqn2}). 
\end{itemize}
 The first two parts are not difficult and we provide elementary proofs of the last two parts. 

\subsection*{Acknowledgements}  This paper would not exist without the inspiration provided by  \cite{flachsiebel, MR528839, MR3858404, Geisser1, Yun} in terms of both mathematical ideas and clear exposition.  We thank Professors Liu, Lorenzini and K.~Sato for their valuable comments on an earlier draft. We heartily thank the referee for a valuable and detailed report.

\section{Preparations}

 \subsection{Elementary identities and known results} 
 The N\'eron--Severi group $\NS(X)$ is the group of $k$-points of the group scheme $\NS_{X/k} = \pi_0(\Pic_{X/k})$ of connected components of the Picard scheme $\Pic_{X/k}$ of $X$. Let $A = \Pic^{\red, 0}_{X/k}$.  The Leray spectral sequence for the morphism $X \to \mathrm{Spec}~k$ and the \'etale sheaf $\mathbb G_m$ provides the first exact 
 sequences \cite[Proposition~4, p.~204]{BLR} below:
 \[0\To  \Pic(k) \To \Pic(X) \To \Pic_{X/k}(k) \To \Br(k) \quad\text{and}\quad  0 \To \Pic^0_{X/k} \To \Pic_{X/k} \To \pi_0(\Pic_{X/k}) \To 0.\] 
Since $\Br(k)=0$, $H^1_{\et}(\mathrm{Spec}~k, \Pic^0_{X/k}) = H^1_{\et}(\mathrm{Spec}~k, \Pic^{\red, 0}_{X/k})$ and $ H^1_{\et}(\mathrm{Spec}~k, A) =0$ (Lang's theorem \cite[p.~209]{Tate1966}), this provides 
\begin{equation}\label{eq7} \Pic_{X/k}(k) = \Pic(X) \quad\text{and}\quad \NS(X) = \NS_{X/k}(k) = \frac{\Pic(X)}{A(k)}.\end{equation}

Let $P$ be the identity component of the Picard scheme $\Pic_{S/k}$ of $S$. Let $B$ be the cokernel of the natural injective map $\pi^*: P \to A$. So one has short exact sequences (using Lang's theorem \cite[p.~209]{Tate1966} for the last sequence)
\begin{equation}\label{eq5} 
A = \Pic^{\red, 0}_{X/k}, \quad P= \Pic^0_{S/k},\quad 0 \To P \To A\To B\To 0, \quad \text{and}\quad 0 \To P(k) \To A(k) \To B(k) \To 0.
\end{equation} 
It is known that \cite[p.~428]{Tate1966}
\begin{equation}\label{picard}
P_1(S, q^{-s}) =P_1(P, q^{-s}), \quad P_1(X, q^{-s}) = P_1(A, q^{-s}),\quad\text{and}\quad P_1(A, q^{-s})= P_1(P, q^{-s})\cdot P_1(B, q^{-s}).
\end{equation} 
For any Abelian variety $G$ of dimension $d$ over $k= \mathbb F_q$, it is well known that \cite[p.~429, top line]{Tate1966} (or \cite[6.1.3]{MR528839})
\begin{equation}\label{fq-points} P_1(G, 1) = [G(k)] \quad\text{and}\quad P_1(G, q^{-1}) = [G(k)]q^{-d}.\end{equation} 

\subsection{ Comparison of $\chi(S, \Lie\,\mathcal J)$ and $\alpha(X)$}\label{comparison}  
 It is known \cite[p.~483]{MR2092767} that
\begin{equation}\label{raynaud}
\chi(S, \Lie\,\mathcal J)  - \dim(B) = -\alpha(X).
\end{equation} 
We include their proof here for the convenience of the reader. A special case of this is due to Gordon \cite[Proposition 6.5]{MR528839}. The Leray spectral sequence for $\pi$ and $\mathcal O_X$ provides $H^0(S, \mathcal O_S) \cong H^0(X, \mathcal O_X)$, 
\[ 0 \to H^1(S, \mathcal O_S) \to H^1(X, \mathcal O_X) \to H^0(S, R^1\pi_*\mathcal O_X) \to 0, \quad H^2(X, \mathcal O_X) \cong H^1(S, R^1\pi_*\mathcal O_X).\]
 This proves $\chi(X, \mathcal O_X) = \chi(S, \mathcal O_S) - \chi(S, R^1\pi_*\mathcal O_X)$. 
 Recall that $\mathcal J$ is the N\'eron model of the Jacobian $J$ of $X_0$.  As  the kernel and cokernel of the natural map\footnote{The map $\phi$ is obtained by the composition of the maps $R^1\pi_*\mathcal O_X \to \Lie\,P$  \cite[Proposition 1.3 (b)]{MR2092767} and $\Lie\,P \to \Lie\,Q$ \cite[Theorem 3.1]{MR2092767} with  $Q \xrightarrow{\sim} \mathcal J$ \cite[Facts 3.7 (a)]{MR2092767}; it uses the fact that $X$ is regular, $\pi:X \to S$ is proper flat, and $\pi_*\mathcal O_X = \mathcal O_S$.} $\phi: R^1\pi_*\mathcal O_X \to \Lie\,\mathcal J$
are torsion sheaves on $S$ of the same length  \cite[Theorem 4.2]{MR2092767}, we have  \cite[p.~483]{MR2092767}
\begin{equation}\label{equality}
\chi(S, R^1\pi_*\mathcal O_X) = \chi(S, \Lie\,\mathcal J).
\end{equation}
Thus, 
\begin{align*} 
\alpha(X) &\overset{(\ref{alphax})}{=} \chi(X, \mathcal O_X) -1+\dim(A) = \chi(S, \mathcal O_S) - \chi(S, R^1\pi_*\mathcal O_X) -1+\dim(A) \\
&=  1 - \dim(P)  - \chi(S, \Lie\,\mathcal J) -1 + \dim(A)=  - \chi(S, \Lie\,\mathcal J) + \dim(A) - \dim(P)\\
&\overset{(\ref{eq5})}{=} - \chi(S, \Lie\,\mathcal J) + \dim(B).
\end{align*}

\subsection{The Tate--Shioda relation about the N\'eron--Severi group} The structure of $\NS(X)$ depends on the singular fibers of the morphism $\pi:X \to S$. 
\subsubsection{Singular fibers}\label{singular}  Let $Z =\{ v \in S~|~\pi^{-1}(v) =X_v~\textrm{is~not~smooth}\}$. For any $v\in S$, let $G_v$ be the set of irreducible components $\Gamma_i$ of $X_v$, let $m_v$ be the cardinality of $G_v$, and $m: = \sum_{v \in Z} (m_v-1)$; for any $i \in G_v$, let $r_i$ be the number of irreducible components of $\Gamma_i \times \overline{k(v)}$. Let $R_v$ be the quotient 
\begin{equation}\label{Rv} R_v = \frac{\mathbb Z^{G_v}}{\mathbb Z}\end{equation}of the free Abelian group generated by the irreducible components of $X_v$ by the subgroup generated by the cycle associated with $X_v = \pi^{-1}(v)$. If $v\notin Z$, then $R_v$ is trivial.

Let $U = S - Z$; the map $X_U = \pi^{-1}(U) \to U$ is smooth.  For any finite $Z' \subset S$ with $Z \subset Z'$, we consider $U' = S - Z'$ and $X_{U'} = X - \pi^{-1}(U')$.  
The following proposition provides a description of $\NS(X) \overset{(\ref{eq7})}{\cong} \Pic(X)/{A(k)}$. 
\begin{proposition}\label{lastlemma}
\leavevmode
\begin{enumerate}[label=(\roman*)]
\item The natural maps $\pi^*: \Pic(S) \to  \Pic(X)$ and $\pi^*: \Pic(U') \to \Pic(X_{U'})$ are injective.
\item There is an exact sequence
\begin{equation}\label{anotherses} 0 \To  \underset{v\in Z}{\oplus}~R_v \To \frac{\Pic(X)}{\pi^*\Pic(S)} \To \Pic( X_0) \To 0.
\end{equation}
\end{enumerate}
\end{proposition}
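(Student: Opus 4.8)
The plan is to argue entirely with Weil divisors, using that $X$ and $X_0$ are regular and integral, so that $\Pic(X)=\mathrm{Cl}(X)$ and $\Pic(X_0)=\mathrm{Cl}(X_0)$. I split $\mathrm{Div}(X)=\mathrm{Div}^{\mathrm h}(X)\oplus\mathrm{Div}^{\mathrm v}(X)$, where $\mathrm{Div}^{\mathrm v}(X)=\bigoplus_{v\in S}\mathbb Z^{G_v}$ is freely generated by the prime divisors contained in fibres (the vertical ones) and $\mathrm{Div}^{\mathrm h}(X)$ by the prime divisors dominating $S$ (the horizontal ones). A horizontal prime divisor and its trace on the generic fibre have the same local ring, so restriction gives an isomorphism $\mathrm{Div}^{\mathrm h}(X)\xrightarrow{\sim}\mathrm{Div}(X_0)$ compatible with $\mathrm{div}(f)$ for $f\in k(X)=k(X_0)$, while every vertical divisor restricts to $0$ on $X_0$. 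Write $\Xi_v\in\mathbb Z^{G_v}$ for the cycle of the fibre $X_v$, i.e.\ the generator of the subgroup $\mathbb Z$ in $(\ref{Rv})$; then $R_v=0$ for $v\notin Z$, and---the one point requiring real attention---$\pi^*[v]=\Xi_v$ for every closed point $v\in S$, because pulling back a local equation of $v$ cuts out precisely the (possibly non-reduced) scheme $X_v=\pi^{-1}(v)$ with its multiplicities.

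For part~(i) I would use the projection formula: since $\pi_*\mathcal O_X=\mathcal O_S$ (as in \S\ref{comparison}), for any line bundle $L$ on $S$ one has $\pi_*\pi^*L\cong L\otimes_{\mathcal O_S}\pi_*\mathcal O_X\cong L$, so $\pi^*L\cong\mathcal O_X$ forces $L\cong\mathcal O_S$. The identical argument applies to $\pi^*:\Pic(U')\to\Pic(X_{U'})$ once one knows $\pi_{U'*}\mathcal O_{X_{U'}}=\mathcal O_{U'}$, which is $\pi_*\mathcal O_X=\mathcal O_S$ restricted along the flat open immersion $U'\hookrightarrow S$.

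For part~(ii), let $\theta:\mathrm{Div}^{\mathrm v}(X)\to\Pic(X)$ be the divisor class map and $V=\theta(\mathrm{Div}^{\mathrm v}(X))$ its image. The core of the proof consists of three facts. \emph{(a)} $V=\ker\bigl(\Pic(X)\to\Pic(X_0)\bigr)$: the inclusion $\subseteq$ holds because vertical divisors die on $X_0$; for $\supseteq$, write a class trivial on $X_0$ as $\mathcal O_X(D)$ with $D=D^{\mathrm h}+D^{\mathrm v}$, note $D^{\mathrm h}|_{X_0}=\mathrm{div}_{X_0}(g)$ for some $g\in k(X)^{\times}$, and use $\mathrm{Div}^{\mathrm h}(X)\cong\mathrm{Div}(X_0)$ to get $D^{\mathrm h}=(\mathrm{div}_X g)^{\mathrm h}$, so $D-\mathrm{div}_X(g)$ is vertical and $[\mathcal O_X(D)]\in V$. \emph{(b)} $\pi^*\Pic(S)=\theta\bigl(\bigoplus_{v}\mathbb Z\,\Xi_v\bigr)\subseteq V$, because $\mathrm{Div}(S)\to\Pic(S)$ is surjective and $\pi^*[v]=\Xi_v$. \emph{(c)} $\ker\theta\subseteq\bigoplus_{v}\mathbb Z\,\Xi_v$: a vertical principal divisor $\mathrm{div}_X(f)$ has trivial horizontal part, so $\mathrm{div}_{X_0}(f)=0$, hence $f\in\mathcal O^{\times}(X_0)=F^{\times}$ ($X_0$ being proper and geometrically connected over $F$), and therefore $\mathrm{div}_X(f)=\pi^*\mathrm{div}_S(f)=\sum_v\ord_v(f)\,\Xi_v$. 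Granting (a)--(c): the map $\theta$ descends to $\bar\rho:\bigoplus_{v\in Z}R_v=\mathrm{Div}^{\mathrm v}(X)/\bigoplus_v\mathbb Z\,\Xi_v\to\Pic(X)/\pi^*\Pic(S)$, which is well defined by (b) and has image $V/\pi^*\Pic(S)$, and it is injective since $\theta(w)\in\pi^*\Pic(S)=\theta\bigl(\bigoplus_v\mathbb Z\,\Xi_v\bigr)$ forces $w\in\ker\theta+\bigoplus_v\mathbb Z\,\Xi_v\subseteq\bigoplus_v\mathbb Z\,\Xi_v$ by (c). On the other side, $\Pic(X)\to\Pic(X_0)$ is surjective (take closures of divisors) and annihilates $\pi^*\Pic(S)$ by (a)--(b), and the kernel of the induced surjection $\Pic(X)/\pi^*\Pic(S)\to\Pic(X_0)$ equals $V/\pi^*\Pic(S)$, the image of $\bar\rho$. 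Assembling these yields the exact sequence $0\to\bigoplus_{v\in Z}R_v\xrightarrow{\bar\rho}\Pic(X)/\pi^*\Pic(S)\to\Pic(X_0)\to0$ of $(\ref{anotherses})$.

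The step I expect to be the main obstacle is the identity $\pi^*[v]=\Xi_v$ together with fact~(c): when $\pi$ is not cohomologically flat in dimension~$0$ the fibre $X_v$ may be non-reduced, and one must check that the multiplicities built into the cycle $\Xi_v$ are exactly those occurring in $\pi^*\mathrm{div}_S(f)$ for $f\in F^{\times}$, so that $\ker\theta$ lands where it should. Working throughout with cycles rather than subschemes makes this bookkeeping automatic and is what dispenses with the cohomological-flatness hypotheses of some earlier treatments; everything else is a routine divisor chase.
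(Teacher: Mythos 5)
Your proof is correct. For part (i) you replace the paper's argument (Leray spectral sequence for $\mathbb G_m$ together with $\pi_*\mathbb G_m=\mathbb G_m$, which holds since $X_0$ is smooth and geometrically connected) by the projection formula together with $\pi_*\mathcal O_X=\mathcal O_S$; these are essentially equivalent inputs and both are legitimate here. For part (ii) the paper instead combines the two localization (class group) sequences for $U'\subset S$ and $X_{U'}\subset X$ into a commutative ladder, identifies $\bigoplus_{v\in Z'}R_v$ as the cokernel of the middle vertical map $\bigoplus_{v\in Z'}\mathbb Z\hookrightarrow\bigoplus_{v\in Z'}\mathbb Z^{G_v}$, and then passes to the limit over $Z'$ to reach $\Pic(X_0)=\varinjlim\Pic(X_{U'})/\pi^*\Pic(U')$. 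Your horizontal/vertical Weil-divisor decomposition unrolls exactly the same content in one step, without the limit: your identity $\pi^*[v]=\Xi_v$ is the commutativity of the paper's middle square, and your fact (c) (that a principal vertical divisor is an integral combination of full fibre cycles, via $\Gamma(X_0,\mathcal O^\times)=F^\times$) is the exactness of the bottom localization row at $\bigoplus_v\mathbb Z^{G_v}$. What the paper's packaging buys is brevity and the formal compatibility of the two rows; what yours buys is that the multiplicity bookkeeping — precisely the point where cohomological flatness plays no role — is made explicit rather than absorbed into the statement of the localization sequence. Both are complete proofs.
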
 
\begin{proof}  (i) 
 From the Leray spectral sequence for $\pi: X\to S$ and the \'etale sheaf $\mathbb G_m$ on $X$, we get the exact sequence
 \[ 0 \To H^1_{et}(S, \pi_* \mathbb G_m) \To H^1_{et}(X, \mathbb G_m) \To H^0(S, R^1\pi_*\mathbb G_m) \To \Br(S).\] 
Now $X_0$ being geometrically connected and smooth over $F$ implies \cite[Remark 1.7a]{MR656050} that $\pi_*\mathbb G_m$ is the sheaf $\mathbb G_m$ on $S$. 
This provides the injectivity of the first map. The same argument with $U'$ in place of $S$ provides the injectivity of the second.

(ii) The class group $\textrm{Cl}(Y)$ and the Picard group $\Pic(Y)$ are isomorphic for regular schemes $Y$ such as $S$ and $X$. The localization sequences for $X_{U'} \subset X$ and $U' \subset S$ can be combined as

\[\begin{tikzcd}
0 \arrow[r,] &\Gamma({S}, \mathbb G_m)   \arrow[r, ] \isoarrow{d} &\Gamma({U'}, \mathbb G_m) \arrow[r,] \isoarrow{d} &\underset{v\in Z'}{\oplus} \mathbb Z \arrow[r, ] \arrow[d, hook, red] & \Pic(S)   \arrow[r, ] \arrow[d, hook, red] &  \Pic(U') \arrow[r,] \arrow[d, hook, red]& 0\\
0 \arrow[r,] & \Gamma({X}, \mathbb G_m)  \arrow[r,] &\Gamma(X_{U'}, \mathbb G_m) \arrow[r,]  &\underset{v\in Z'}{\oplus} \mathbb Z^{G_v} \arrow[r,]  & \Pic(X)   
\arrow[r,]   &  \Pic(X_{U'}) \arrow[r,]  & 0.
\end{tikzcd}\]
Here $\Gamma ({X}, \mathbb G_m) = H^0_{et}({X}, \mathbb G_m) = H^0_{Zar}({X}, \mathbb G_m)$. The induced exact sequence on the cokernels of the vertical maps is
\[0 \To  \underset{v\in Z'}{\oplus} R_v \To \frac{\Pic(X)}{\pi^*\Pic(S)} \To \frac{\Pic(X_{U'})}{\pi^*\Pic(U')} \To 0.\]
In particular, we get this sequence for $Z$ and $U$. By assumption, $X_v$ is geometrically irreducible for any $v \notin Z$; so $R_v =0$ for any $v\notin Z$. 
So this means that, for any $U' = S - Z'$ contained in $U$, the induced maps 
\[\frac{\Pic(X_U)}{\pi^*\Pic(U)}\To \frac{\Pic(X_{U'})}{\pi^*\Pic(U')}\]
are isomorphisms. Taking the limit over $Z'$ gives us the exact sequence in the proposition.\end{proof} 

\begin{corollary}\label{shioda}
\leavevmode
\begin{enumerate}[label=(\roman*)]
\item The Tate--Shioda relation \cite[(4.5)]{Tate1966} $\rho(X) = 2 + r + m$  holds. 
\item One has an exact sequence 
\[0 \To B(k) \To \frac{\Pic(X)}{\pi^*\Pic(S)} \To \frac{\NS(X)}{\pi^*\NS(S)} \To 0.\]
\end{enumerate}
\end{corollary}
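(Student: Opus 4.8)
The plan is to obtain both assertions directly from Proposition~\ref{lastlemma}, using only the elementary identities (\ref{eq7}), (\ref{eq5}) and the finiteness of the groups of $k$-points of Abelian varieties over $\mathbb F_q$; the only genuinely geometric input beyond that is the positivity of intersection with an ample divisor on $X$.

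\textbf{Part (i).} I would take ranks in the exact sequence (\ref{anotherses}). Each $R_v$ is a quotient of $\Z^{G_v}$ by a single nonzero vector (the class of $X_v$), so $\rank R_v=m_v-1$ and $\rank\bigoplus_{v\in Z}R_v=m$. By (\ref{eq7}), $\NS(X)=\Pic(X)/A(k)$ with $A(k)$ finite, so $\rank\Pic(X)=\rho(X)$; likewise $\NS(S)=\Pic(S)/P(k)\cong\Z$ with $P(k)$ finite, so $\rank\Pic(S)=1$, and this equals $\rank\pi^*\Pic(S)$ by the injectivity in Proposition~\ref{lastlemma}(i). Hence the middle term of (\ref{anotherses}) has rank $\rho(X)-1$. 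Finally $\Pic(X_0)$ has rank $r+1$: it differs from $\Pic_{X_0/F}(F)$ by a subgroup of the torsion group $\Br(F)$, and $\Pic_{X_0/F}(F)$ is an extension of a nonzero subgroup of $\Z$ — it contains the degree of the restriction to $X_0$ of an ample divisor of $X$ — by $J(F)=\Pic^0_{X_0/F}(F)$, which has rank $r$. Additivity of ranks in (\ref{anotherses}) then reads $\rho(X)-1=m+(r+1)$, i.e.\ $\rho(X)=2+r+m$.

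\textbf{Part (ii).} The idea is to run the snake lemma on the two ``$\pi^*$'' short exact sequences. By (\ref{eq7}) we have $\NS(S)=\Pic(S)/P(k)$, $\NS(X)=\Pic(X)/A(k)$, and $\pi^*$ carries $P=\Pic^0_{S/k}$ into $A=\Pic^{\red, 0}_{X/k}$; so the injection $\pi^*\colon\Pic(S)\hookrightarrow\Pic(X)$ of Proposition~\ref{lastlemma}(i) induces a morphism of short exact sequences from $0\To\Pic(S)\To\Pic(X)\To\Pic(X)/\pi^*\Pic(S)\To 0$ to $0\To\NS(S)\To\NS(X)\To\NS(X)/\pi^*\NS(S)\To 0$, the three vertical maps being the (surjective) quotient maps. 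First one checks that $\pi^*\colon\NS(S)\to\NS(X)$ is injective, so that the bottom row really is short exact: since $\NS(S)\cong\Z$, it suffices that the composite $\NS(S)\xrightarrow{\pi^*}\NS(X)\xrightarrow{-\cdot H}\Z$ with an ample divisor $H$ be nonzero, and indeed it sends the class of a point $v$ to $[X_v]\cdot H>0$. The snake lemma now applies, and since all three vertical maps are surjective it collapses to an exact sequence $0\To\ker(\Pic(S)\to\NS(S))\To\ker(\Pic(X)\to\NS(X))\To K\To 0$, where $K=\ker\bigl(\Pic(X)/\pi^*\Pic(S)\to\NS(X)/\pi^*\NS(S)\bigr)$. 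The first two terms are $P(k)$ and $A(k)$, and the map between them is $\pi^*$; comparing with the exact sequence $0\To P(k)\To A(k)\To B(k)\To 0$ of (\ref{eq5}) identifies $K$ with $B(k)$, while surjectivity of $\Pic(X)/\pi^*\Pic(S)\to\NS(X)/\pi^*\NS(S)$ is inherited from the two left-hand columns. This is exactly the stated sequence.

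Essentially nothing here is a real obstacle: the argument is bookkeeping with the short exact sequences already recorded, the finiteness of $A(k)$, $P(k)$, $B(k)$, and two uses of ``intersection with an ample class is positive'' — one to produce the extra ``$+1$'' in $\rank\Pic(X_0)$, one for the injectivity of $\pi^*$ on $\NS(S)$. If anything, the point requiring the most care is the identification $\rank\Pic(X_0)=r+1$, where one must pass between $\Pic(X_0)$, $\Pic_{X_0/F}(F)$ and $J(F)$.
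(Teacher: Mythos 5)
Your proof is correct and follows essentially the same route as the paper: rank-counting in the exact sequence (\ref{anotherses}) together with (\ref{eq7}) for part (i), and the $3\times 3$ diagram comparing the $\pi^*$-sequences on $\Pic$ and $\NS$ (the snake lemma) together with (\ref{eq5}) for part (ii). You merely spell out two details the paper leaves implicit, namely $\rank\Pic(X_0)=r+1$ and the injectivity of $\pi^*$ on $\NS(S)$.
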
 
\begin{proof}  
(i) Since $r$ is the rank of $J(F)$, the rank of $\Pic(X_0)$ is $r+1$. Since $\Pic(S)$ has rank one, $A(k)$ is finite and $m = \sum_{v \in Z} (m_v-1)$, this follows from (\ref{eq7}) and (\ref{anotherses}). 

\noindent (ii) This follows from the diagram
\[
\begin{tikzcd}
0 \arrow[r,] & P(k)  \arrow[r,"\pi^*"] \arrow[d, hook, red] & A(k)  \arrow[r,] \arrow[d, hook, red] &B(k) \arrow[r,] \arrow[d, hook, red] & 0 \\
0 \arrow[r,] &\Pic(S) \arrow[r,"\pi^*"]  \arrow[d, ] & \Pic(X) \arrow[r,] \arrow[d, ] &\frac{\Pic(X)}{\pi^*\Pic(S)} \arrow[r,]  \arrow[d, ]& 0\\
0 \arrow[r,] & \NS(S) \arrow[r, "\pi^*"]  & \NS(X) \arrow[r,] & \frac{\NS(X)}{\pi^*\NS(S)} \arrow[r,] & 0.
\end{tikzcd}\]
\end{proof}

 \subsection{Relating the order of vanishing at $s=1$ of $P_2(X, q^{-s})$ and $L(J,s)$} 
 
 By\footnote{This proposition, first stated on Page 176 of \cite{MR528839}, has a typo in the formula for $P_2$ which is corrected in its restatement on Page 193. We only need the part  about $P_2$ (and this is elementary).} \cite[Proposition 3.3]{MR528839}, one has  
\begin{equation}\label{newone}
\zeta(X_v,s) = \frac{P_1(X_v, q_v^{-s})}{(1-q_v^{-s})\cdot P_2(X_v,q_v^{-s})}, \quad\text{and}\quad P_2(X_v,q_v^{-s}) =  \left\{\begin{array}{lr}
        (1-q_v^{1-s}),  & \text{for } v \notin Z\\
        \prod_{i\in G_v}(1-(q_v)^{r_i(1-s)}), & \text{for } v\in Z
        \end{array}\right\},
\end{equation}
see \S \ref{singular} for notation. Using
\[Q_2(s) = \prod_{v\in Z} \frac{P_2(X_v, q^{-s} )}{(1-q_v^{1-s})},\quad \zeta (S,s) =\frac{P_1(S,q^{-s})}{(1-q^{-s})\cdot (1-q^{1-s})},\quad\text{and}\quad Q_1(s) = \prod_{v\in S} P_1(X_v,q_v^{-s}),\]
we can rewrite
 \[ \zeta(X,s) = \prod_{v \in S} \zeta(X_v,s) = \frac{1}{Q_2(s)}\cdot \prod_{v\in S} \frac{P_1(X_v,q_v^{-s})}{(1-q_v^{-s})\cdot (1-q_v^{1-s})} = \frac{\zeta(S,s)\cdot \zeta(S, s-1)\cdot Q_1(s)}{Q_2(s)}.\] 
 The precise relation between $P_2(X, q^{-s})$ and $L(J,s)$ is given by (\ref{eq9}).
 \begin{proposition} One has $\ord_{s=1} Q_2(s) =m$ and 
 \begin{equation}\label{rm2}  Q_2^*(1) = \lim_{s \to 1} \frac{Q_2(s)}{(s-1)^m} =\prod_{v\in Z}~ {\left( (\log q_v)^{(m_v-1)}\cdot \prod_{i\in G_v} r_i\right)},\end{equation}
 \begin{equation}\label{eq9} \frac{P_2(X,q^{-s}) }{(1-q^{1-s})^2}= P_1(B,q^{-s})\cdot P_1(B,q^{1-s})\cdot L(J,s)\cdot Q_2(s).\end{equation}
 \end{proposition}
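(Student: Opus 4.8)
The plan is to read off both assertions from two factorisations of $\zeta(X,s)$ that are already available: the fibrewise identity $\zeta(X,s)=\zeta(S,s)\,\zeta(S,s-1)\,Q_1(s)/Q_2(s)$ recorded above and the cohomological factorisation (\ref{zetaX}). The statement about $Q_2(s)$ is then a residue computation, and (\ref{eq9}) follows by a short rearrangement once one knows the local comparison $P_1(X_v,t)=L_v(J,t)$.

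For the claim $\ord_{s=1}Q_2(s)=m$ and formula (\ref{rm2}), I would work one $v\in Z$ at a time. By (\ref{newone}) the $v$-th factor of $Q_2(s)$ is $\big(\prod_{i\in G_v}(1-q_v^{r_i(1-s)})\big)\big/(1-q_v^{1-s})$, and since $1-q_v^{r(1-s)}=r (\log q_v)(s-1)+O((s-1)^2)$ for every integer $r\ge 1$, this factor vanishes at $s=1$ to order $m_v-1$ (order $m_v$ from the numerator, order $1$ from the denominator) with leading coefficient $\big(\prod_{i\in G_v} r_i\log q_v\big)/\log q_v=(\log q_v)^{m_v-1}\prod_{i\in G_v} r_i$. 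Multiplying over $v\in Z$ gives $\ord_{s=1}Q_2(s)=\sum_{v\in Z}(m_v-1)=m$ and (\ref{rm2}).

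For (\ref{eq9}), I would substitute $\zeta(S,s)=P_1(S,q^{-s})/\big((1-q^{-s})(1-q^{1-s})\big)$ and the analogous expression for $\zeta(S,s-1)$ into the fibrewise identity, compare with (\ref{zetaX}), and cancel the common factor $(1-q^{-s})(1-q^{2-s})$; this yields
\[
\frac{P_2(X,q^{-s})}{(1-q^{1-s})^2}=\frac{P_1(X,q^{-s})\,P_1(X,q^{1-s})}{P_1(S,q^{-s})\,P_1(S,q^{1-s})}\cdot\frac{Q_2(s)}{Q_1(s)}.
\]
Then (\ref{picard}) gives $P_1(X,q^{-s})/P_1(S,q^{-s})=P_1(A,q^{-s})/P_1(P,q^{-s})=P_1(B,q^{-s})$ and likewise with $q^{1-s}$, so the right-hand side becomes $P_1(B,q^{-s})\,P_1(B,q^{1-s})\,Q_2(s)/Q_1(s)$. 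Since $Q_1(s)=\prod_{v\in S}P_1(X_v,q_v^{-s})$ while $L(J,s)^{-1}=\prod_{v\in S}L_v(J,q_v^{-s})$, it remains precisely to check the local equalities $P_1(X_v,t)=L_v(J,t)$ for every closed point $v\in S$.

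This last point is the one place where the argument leaves the realm of formal manipulation, and I expect it to be the main obstacle. For $v\notin Z$ it is the good-reduction compatibility: $X_v$ is smooth, $I_v$ acts trivially on $H^1(X_0\times F_v^{\sep},\mathbb Q_\ell)=H^1(J\times F_v^{\sep},\mathbb Q_\ell)$, and smooth proper base change identifies this Galois module with $H^1(X_v\times\overline{k(v)},\mathbb Q_\ell)$ compatibly with Frobenius. For $v\in Z$ one uses that $\pi\colon X\to S$ is a \emph{regular} proper flat model of $X_0$, so that Raynaud's description of the N\'eron model $\mathcal J$ via $\Pic^0_{X/S}$ and the SGA 7 description of $H^1(X_0\times F_v^{\sep},\mathbb Q_\ell)^{I_v}$ in terms of the abelian and toric parts of the special fibre $\mathcal J_v^0$ together force the characteristic polynomial of Frobenius on $H^1(X_v\times\overline{k(v)},\mathbb Q_\ell)$ to coincide with the one on $H^1(X_0\times F_v^{\sep},\mathbb Q_\ell)^{I_v}$; this is $P_1(X_v,t)=L_v(J,t)$, and in practice it can simply be quoted from the sources used to set up $L(J,s)$. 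Granting it, taking the product over $v\in S$ gives $Q_1(s)=L(J,s)^{-1}$, and hence (\ref{eq9}).
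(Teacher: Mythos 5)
Your proposal is correct and follows essentially the same route as the paper: the same one-factor-at-a-time expansion $1-q_v^{r(1-s)}=r(\log q_v)(s-1)+O((s-1)^2)$ for \eqref{rm2}, and the same rearrangement of the two factorisations of $\zeta(X,s)$ reducing \eqref{eq9} to the local identity $P_1(X_v,t)=L_v(J,t)$. The only (minor) difference is how that local identity is justified: the paper identifies $H^1_{\et}(X_0\times F_v^{\sep},\mathbb Q_\ell)$ with $H^1_{\et}(J\times F_v^{\sep},\mathbb Q_\ell)$ via the Kummer sequence and then quotes Deligne (Weil II, Th\'eor\`eme 3.6.1) for $H^1_{\et}(X_v\times\overline{k(v)},\mathbb Q_\ell)\cong H^1_{\et}(X\times_S F_v^{\sep},\mathbb Q_\ell)^{I_v}$ uniformly at all $v$, whereas you split into good and bad fibres and appeal to N\'eron-model/SGA~7 considerations at the bad ones; both are acceptable ways of quoting the same standard input.
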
 
 \begin{proof} Observe that (\ref{rm2}) is elementary: for any positive integer $r$,
 one has
 \[ \lim _{s \to 1} \frac{(1-q_v^{r(1- s)})}{(s-1)} = \lim _{s \to 1} \frac{(1-q_v^{r(1- s)})}{(1-q_v^{1-s})}\cdot \frac{(1-q_v^{1-s})}{(s-1)}  = \lim _{s \to 1} (1 + q_v^{1-s} + \cdots+ q_v^{(r-1)(1-s)})\cdot \log q_v = r\cdot \log q_v. \]
 For each $v \in Z$, this shows that 
 \[ \lim_{s \to 1} \frac{P_2(X_v, q^{-s})}{(s-1)^{m_v}} =  (\log q_v)^{m_v} \cdot \prod_{i\in G_v} r_i. \] 
 Therefore, we obtain that 
 \[  \lim_{s \to 1} \frac{Q_2(s)}{(s-1)^m}  = \prod_{v\in Z} \lim_{s \to 1}  \frac{\frac{P_2(X_v, q^{-s})}{(1-q_v^{1-s})}}{(s-1)^{m_v-1}} =\prod_{v \in Z} \lim_{s \to 1}  \frac{\frac{P_2(X_v, q^{-s})}{(s-1)^{m_v}}}{\frac{(1-q_v^{1-s})}{s-1}} = \prod_{v\in Z}  {\left( \frac{(\log q_v)^{m_v} \cdot \prod_{i\in G_v} r_i.}{\log q_v} \right)}. \]
   
 We now prove (\ref{eq9}).  Simplifying the identity
 \[ \frac{P_1(X,q^{-s})~\cdot~P_1(X,q^{1-s})}{(1-q^{-s})\cdot P_2(X,q^{-s})\cdot (1-q^{2-s})}=  \zeta(X,s) = \frac{P_1(S,q^{-s})}{(1-q^{-s})\cdot (1-q^{1-s})}\cdot \frac{P_1(S,q^{1-s})}{(1-q^{1-s})\cdot (1-q^{2-s})}\cdot \frac{Q_1(s)}{Q_2(s)} \]
 from (\ref{zetaX}) using (\ref{picard}), one obtains
 \[\frac{P_1(B,q^{-s})\cdot P_1(B,q^{1-s})}{P_2(X,q^{-s})}= \frac{1}{(1-q^{1-s})}\cdot \frac{1}{(1-q^{1-s})}\cdot \frac{Q_1(s)}{Q_2(s)}. \]
 On reordering, this becomes
 \[\frac{P_2(X,q^{-s}) }{(1-q^{1-s})^2}= \frac{P_1(B,q^{-s})\cdot P_1(B,q^{1-s})\cdot Q_2(s)}{Q_1(s)}.\]
 Let $T_{\ell}J$ be the $\ell$-adic Tate module of the Jacobian $J$ of $X$. For any $v \in S$, the Kummer sequence on $X$ and $J$ provides a $\textrm{Gal}(F_v^{\sep}/{F_v})$-equivariant isomorphism 
\[H^1_{\et}(X \times_S F_v^{\sep}, \mathbb Z_{\ell}(1)) \stackrel{\sim}{\To} T_{\ell}J \stackrel{\sim}{\longleftarrow} H^1_{\et}(J \times_F F_v^{\sep}, \mathbb Z_{\ell}(1)),\]
as $J$ is a self-dual Abelian variety: this provides the isomorphisms
 \[H^1_{\et}(J \times_F F_v^{\sep}, \mathbb Q_{\ell}) \cong H^1_{\et}(X 
\times_S F_v^{\sep}, \mathbb Q_{\ell}), \quad  H^1_{\et}(J \times_F F_v^{\sep}, \mathbb Q_{\ell})^{I_v} \cong H^1_{\et}(X \times_S F_v^{\sep}, \mathbb Q_{\ell})^{I_v}.\] 
From \cite[Th\'eor\`eme 3.6.1, pp.213--214]{MR601520} (the arithmetic case is in \cite[Lemma 1.2]{MR899399}), we obtain an isomorphism 
\[H^1_{\et}(X_v \times_{k(v)} \overline{k(v)}, \mathbb Q_{\ell}) \stackrel{\sim}{\longrightarrow} H^1_{\et}(X \times_S F_v^{\sep}, \mathbb Q_{\ell})^{I_v}.\] 
The definition of $L_v(J,t)$ in (\ref{rm7}) now implies that $P_1(X_v,q_v^{-s}) = L_v(J, q_v^{-s})$ and hence $Q_1(s)\cdot L(J,s) =1$.\end{proof}

\begin{proposition}\label{prop2}
\leavevmode
\begin{enumerate}[label=(\roman*)]
\item $\ord_{s=1} P_2(X, q^{-s}) = \rho(X)$ if and only if $\ord_{s=1}L(J,s) =r$. 
\item One has 
\begin{equation}\label{eq1}
P_2^*(X,\frac{1}{q}) = P_1(B,q^{-1})\cdot P_1(B,1)\cdot L^*(J,1)\cdot Q_2^*(1)\cdot (\log~q)^2 \overset{(\ref{fq-points})}{=} \frac{[B(k)]^2}{q^{\dim(B)}}\cdot L^*(J,1)\cdot Q_2^*(1) \cdot (\log~q)^2.
\end{equation} 
\end{enumerate}
\end{proposition}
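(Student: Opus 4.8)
\emph{Proof proposal.} The plan is to extract both statements directly from the factorization (\ref{eq9}), reading it as an identity of meromorphic functions in a neighborhood of $s=1$ and comparing orders of vanishing and leading Laurent coefficients.

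First I would record the elementary local behavior at $s=1$ of the factors occurring in (\ref{eq9}). The function $1-q^{1-s}$ has a simple zero at $s=1$ with $\lim_{s\to1}(1-q^{1-s})/(s-1)=\log q$, so $(1-q^{1-s})^2$ vanishes to order exactly $2$. The functions $P_1(B,q^{-s})$ and $P_1(B,q^{1-s})$ are polynomials in $q^{-s}$, respectively $q^{1-s}$, hence entire; by (\ref{fq-points}) their values at $s=1$ are $P_1(B,q^{-1})=[B(k)]q^{-\dim B}$ and $P_1(B,1)=[B(k)]$, which are nonzero since $[B(k)]\ge 1$. Consequently, in (\ref{eq9}) the only factors contributing to the order of vanishing at $s=1$ are $(1-q^{1-s})^2$, $L(J,s)$ and $Q_2(s)$, so
\[\ord_{s=1}P_2(X,q^{-s}) \;=\; 2+\ord_{s=1}L(J,s)+\ord_{s=1}Q_2(s).\]
Since $\ord_{s=1}Q_2(s)=m$ by (\ref{rm2}) and $\rho(X)=2+r+m$ by Corollary \ref{shioda}(i), this rearranges to $\ord_{s=1}P_2(X,q^{-s}) = \bigl(\ord_{s=1}L(J,s)-r\bigr)+\rho(X)$, which equals $\rho(X)$ if and only if $\ord_{s=1}L(J,s)=r$. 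This is part (i).

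For part (ii), I would divide (\ref{eq9}) by $(s-1)^{\rho(X)}=(s-1)^{2}(s-1)^{r}(s-1)^{m}$ and pass to the limit $s\to 1$. By definition $P_2^*(X,1/q)$, $L^*(J,1)$ and $Q_2^*(1)$ are the limits of $P_2(X,q^{-s})/(s-1)^{\rho(X)}$, $L(J,s)/(s-1)^{r}$ and $Q_2(s)/(s-1)^{m}$; moreover $(1-q^{1-s})^2/(s-1)^2\to(\log q)^2$, while the entire factors $P_1(B,q^{-s})$ and $P_1(B,q^{1-s})$ may simply be evaluated at $s=1$. This gives $P_2^*(X,1/q)=P_1(B,q^{-1})\cdot P_1(B,1)\cdot L^*(J,1)\cdot Q_2^*(1)\cdot(\log q)^2$, the first equality in (\ref{eq1}); substituting $P_1(B,q^{-1})=[B(k)]q^{-\dim B}$ and $P_1(B,1)=[B(k)]$ from (\ref{fq-points}) yields the second.

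The only point needing any care is the order-of-vanishing bookkeeping together with the nonvanishing of the $P_1(B,\cdot)$ factors at $s=1$; but I expect no real obstacle here, since all the substantive input — the precise relation (\ref{eq9}) between $P_2(X,q^{-s})$ and $L(J,s)$, the computation (\ref{rm2}) of $Q_2^*(1)$, and the Tate--Shioda formula $\rho(X)=2+r+m$ — has already been established.
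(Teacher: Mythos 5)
Your proposal is correct and follows essentially the same route as the paper: the paper likewise deduces $\ord_{s=1}P_2(X,q^{-s})-2=\ord_{s=1}L(J,s)+\ord_{s=1}Q_2(s)$ from (\ref{eq9}) and the nonvanishing of $P_1(B,q^{-s})P_1(B,q^{1-s})$ at $s=1$, then invokes Corollary \ref{shioda} and $\ord_{s=1}Q_2(s)=m$, and obtains (ii) by the same limit computation using (\ref{fq-points}) and (\ref{eq9}). Your write-up merely spells out the bookkeeping (the factor $(1-q^{1-s})^2/(s-1)^2\to(\log q)^2$ and the evaluation of the $P_1(B,\cdot)$ factors) that the paper leaves implicit.
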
 
\begin{proof} As  $P_1(B, q^{-s})\cdot P_1(B, q^{1-s})$ does not vanish at $s=1$ by  (\ref{fq-points}), it follows from (\ref{eq9}) that \[\ord_{s=1} P_2(X, q^{-s})  -2 = \ord_{s=1}L(J,s) + \ord_{s=1}Q_2(s).\] 
Corollary \ref{shioda} says $\rho(X) = r +m +2$; (i) follows as $\ord_{s=1} Q_2(s) =m$.  

For (ii), use (\ref{fq-points}) and (\ref{eq9}). \end{proof}

\subsection{Pairings on $\NS(X)$} Our next task is to compute $\Delta(\NS(X))$. 
\begin{definition}
\leavevmode
\begin{enumerate}[label=(\roman*)]
\item Let $\Pic^0(X_0)$ be the kernel of the degree map $\textrm{deg}: \Pic(X_0) \to \mathbb Z$; the order $\delta$ of its cokernel is, by definition, the index of $X_0$ over $F$.
\item Let $\alpha$ be the order of the cokernel of the natural map $\Pic^0(X_0) \hookrightarrow J(F)$. 
\item Let $H$ (horizontal divisor on $X$) be the Zariski closure in $X$ of  a divisor $d$ on $X_0$, rational over $F$, of degree $\delta$. 
\item The (vertical) divisor $V$ on $X$ is $\pi^{-1}(s)$ for a divisor $s$ of degree one on $S$. Such a divisor $s$ exists as $k$ is a finite field and so the index of the curve $S$ over $k$ is one. Writing $s = \sum a_i v_i$ as a sum of closed points $v_i$ on $S$ gives $V  =\sum a_i \pi^{-1}(v_i)$. Note that $V$ generates $\pi^*\NS(S) \subset \NS(X)$.
\end{enumerate}
\end{definition}

\begin{remark*} The definitions show that the intersections of the divisor classes $H$ and $V$ in $\NS(X)$ are given by
\begin{equation}\label{hv}
H\cdot V = \delta = V\cdot H \quad\text{and}\quad V\cdot V=0.
\end{equation} 
Also, since $\pi:X \to S$ is a flat map between smooth schemes, the map $\pi^*:CH(S) \to CH(X)$ on Chow groups is compatible with intersection of cycles. Since $V = \pi^*(s)$ and the intersection $s\cdot s = 0$ in $CH(S)$, one has  $V\cdot V =0$. 
\end{remark*}

Let $\NS(X)_0= (\pi^*\NS(S))^{\perp}$; as $V$ generates $\pi^*\NS(S)$, we see that $\NS(X)_0$  is the subgroup of divisor classes $Y$ such that  $Y\cdot X_v =0$ for any fiber $\pi^{-1}(v) =X_v$ of $\pi$; let $\Pic(X)_0$ be the inverse image of $\NS(X)_0$ under the projection $\Pic(X) \to \NS(X) \cong \frac{\Pic(X)}{A(k)}$.  
\begin{lemma}\label{gordon} $\NS(X)_0$ is the subgroup of $\NS(X)$ generated by divisor classes whose restriction to $X_0$ is trivial.
\end{lemma}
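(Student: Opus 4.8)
The plan is to reduce the lemma to a single identity. For a divisor $D$ on $X$ with class $Y\in\NS(X)$, I claim
\[
Y\cdot V \;=\; \deg\!\big(D|_{X_{0}}\big),
\]
where $V$ is the fibral divisor of \ref{hv} (it generates $\pi^{*}\NS(S)$, so $\NS(X)_{0}=V^{\perp}$) and $\deg\colon\Pic(X_{0})\to\Z$ is the degree map of the definition preceding the lemma. Granting the identity, its left-hand side depends only on $Y$, hence so does $\deg(D|_{X_{0}})$; thus the restriction map $\Pic(X)\to\Pic(X_{0})\to\NS(X_{0})$ descends to $\NS(X)\to\NS(X_{0})$, the condition ``$Y|_{X_{0}}$ trivial'' means $\deg(D|_{X_{0}})=0$, and this reads $Y\cdot V=0$, i.e.\ $Y\in V^{\perp}=\NS(X)_{0}$. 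Since such classes already form a subgroup, they generate exactly $\NS(X)_{0}$, which is the assertion.

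To establish the identity I would write $D=\sum_{j}n_{j}W_{j}$ with the $W_{j}$ prime and split the sum into horizontal components (those $W_{j}$ dominating $S$; each such $W_{j}\to S$ is then finite of degree $\deg(W_{j}/S)=[\kappa(W_{j}):F]$) and vertical ones (contained in a fibre). The vertical $W_{j}$ are contracted by $\pi$ and each horizontal $W_{j}$ maps finitely to $S$, so $\pi_{*}D=d\cdot[S]$ with $d=\sum_{W_{j}\text{ horizontal}}n_{j}\,\deg(W_{j}/S)$; the projection formula then gives $Y\cdot V=D\cdot\pi^{*}(s)=(\pi_{*}D)\cdot s=d$, since $s$ has degree $1$ on $S$. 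On the other hand, restriction to the generic fibre kills the vertical $W_{j}$ and carries each horizontal $W_{j}$ to the closed point $W_{j}\times_{S}\Spec F$ of $X_{0}$, which has residue field $\kappa(W_{j})$; hence $\deg(D|_{X_{0}})=\sum_{W_{j}\text{ horizontal}}n_{j}[\kappa(W_{j}):F]=d$ as well. (As a check, for $D=H$ this recovers $H\cdot V=\delta$ from \ref{hv}, and for $D$ vertical it gives $D\cdot V=0$.)

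There is no serious obstacle here; the two points that need care are that restriction of divisor classes to $X_{0}$ descends to N\'eron--Severi groups --- which is a by-product of the identity itself --- and that the projection-formula bookkeeping yields $Y\cdot V$ equal to the generic-fibre degree on the nose, not up to an integer factor. Equivalently, one may argue fibre by fibre: the same computation gives $Y\cdot X_{v}=\deg(v)\cdot\deg(D|_{X_{0}})$ for every $v\in S$, so the description of $\NS(X)_{0}$ recalled just before the lemma --- classes $Y$ with $Y\cdot X_{v}=0$ for all fibres --- is at once equivalent to $\deg(D|_{X_{0}})=0$.
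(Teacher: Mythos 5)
Your proof is correct and takes essentially the same route as the paper's: both reduce the lemma to the identity $D\cdot V=\deg(D|_{X_0})$ by splitting a divisor into vertical and horizontal parts, the vertical part dying on both sides. The only (immaterial) difference is in the horizontal case, where the paper notes that an integral horizontal divisor is flat over $S$ with $\mathcal{O}_D$ locally free of rank $\deg(D_0)$ over $\mathcal{O}_S$, while you invoke the projection formula $\pi_*(D\cdot\pi^*s)=\pi_*D\cdot s$.
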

\begin{proof}  We need to show that $\NS(X)_0$ is equal to $K:= \textrm{Ker}(\NS(X) \to \NS(X_0))$. If $D$ is a vertical divisor ($\pi(D)\subset S$ is finite), then $D$ is clearly in $K$; by \cite[\S 9.1, Proposition 1.21]{MR1917232}, $D$ is in $\NS(X)_0$. 

 If $D$ has no vertical components, then $D\cdot V = \textrm{deg}(D_0)$. To see this, clearly we may assume $D$ is reduced and irreducible (integral) and so flat over $S$. So $\mathcal O_D$ is locally free over $\mathcal O_S$ of constant degree $n$ since $S$ is connected. But then $\textrm{deg}(D_0)$ is equal to $n$ as is the integer $D\cdot V$.\end{proof}

\begin{lemma}  Let us denote
\[ R=  \underset{v\in Z}{\oplus} R_v \quad\text{and}\quad E= B(k) \cap R \subset  \frac{\Pic(X)_0}{\pi^*\Pic(S)}.\] One has the exact sequences
\begin{align}\label{split}
&0 \To R \To \frac{\Pic(X)_0}{\pi^*\Pic(S)} \To \Pic^0(X_0) \To 0, \quad\text{and}\notag \\
&0 \To \frac{R}{E} \To \frac{\NS(X)_0}{\pi^*\NS(S)} \To \frac{ \Pic^0(X_0)}{B(k)/E} \To 0 .
\end{align} 
\end{lemma}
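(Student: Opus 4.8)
The plan is to build both short exact sequences from the localization/class-group machinery already assembled, intersected with the subgroup $\Pic(X)_0$, and then to pass to the N\'eron--Severi quotient by a diagram chase. First I would establish the top sequence. Proposition \ref{lastlemma}(ii) gives the exact sequence
\[0 \To R \To \frac{\Pic(X)}{\pi^*\Pic(S)} \To \Pic(X_0) \To 0,\]
and the point is simply to intersect everything with the subgroup $\Pic(X)_0 \subset \Pic(X)$. Since $R = \oplus_{v\in Z} R_v$ is generated by vertical divisor classes, Lemma \ref{gordon} (or directly \cite[\S 9.1, Proposition 1.21]{MR1917232}) shows $R$ maps into $\NS(X)_0$, hence $R \subset \Pic(X)_0/\pi^*\Pic(S)$, so the subgroup $R$ survives. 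On the right, I need to identify the image of $\Pic(X)_0/\pi^*\Pic(S)$ in $\Pic(X_0)$: a class $D$ lies in $\Pic(X)_0$ iff $D\cdot X_v = 0$ for all $v$; intersecting with a single fiber $V = \pi^{-1}(s)$ of degree one and using $D\cdot V = \deg(D_0)$ for divisors with no vertical component (the computation in the proof of Lemma \ref{gordon}), this condition is exactly $\deg(D_0) = 0$, i.e. the image is $\Pic^0(X_0)$. A short snake/diagram argument then yields the first exact sequence.

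Next I would deduce the second sequence from the first by quotienting the three middle terms by $A(k)$, respectively its sub/quotient pieces. Recall $\NS(X) = \Pic(X)/A(k)$ from \eqref{eq7}, and by Corollary \ref{shioda}(ii) there is an exact sequence $0 \to B(k) \to \Pic(X)/\pi^*\Pic(S) \to \NS(X)/\pi^*\NS(S) \to 0$; restricting to the degree-zero part gives $0 \to B(k) \to \Pic(X)_0/\pi^*\Pic(S) \to \NS(X)_0/\pi^*\NS(S) \to 0$ (that $B(k)$ lands in $\Pic(X)_0$ is clear since $A(k)$, hence $B(k)$, consists of classes restricting trivially to $X_0$, a fortiori of degree zero on $X_0$). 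Now I have a commutative diagram of short exact sequences: the column $0 \to B(k) \to \Pic(X)_0/\pi^*\Pic(S) \to \NS(X)_0/\pi^*\NS(S) \to 0$ and the first row $0 \to R \to \Pic(X)_0/\pi^*\Pic(S) \to \Pic^0(X_0) \to 0$. The subgroup $B(k) \cap R$ is by definition $E$, so the snake lemma applied to
\[\begin{tikzcd}
0 \arrow[r] & B(k) \arrow[r] \arrow[d] & \dfrac{\Pic(X)_0}{\pi^*\Pic(S)} \arrow[r] \arrow[d] & \NS(X)_0/\pi^*\NS(S) \arrow[r] \arrow[d] & 0
\end{tikzcd}\]
together with the subobject $R$ gives: kernel of the induced map $R \to \NS(X)_0/\pi^*\NS(S)$ is $E$, its cokernel fits in $0 \to R/E \to \NS(X)_0/\pi^*\NS(S) \to \Pic^0(X_0)/(\text{image of }B(k)) \to 0$, and the image of $B(k)$ in $\Pic^0(X_0) = (\Pic(X)_0/\pi^*\Pic(S))/R$ is $B(k)/(B(k)\cap R) = B(k)/E$. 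This is exactly the claimed second sequence.

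The main obstacle is the identification of the right-hand term of the first sequence, i.e. verifying that a class in $\Pic(X)/\pi^*\Pic(S)$ lies in $\Pic(X)_0/\pi^*\Pic(S)$ precisely when its image in $\Pic(X_0)$ has degree zero. One direction is immediate (vertical classes restrict to $0$ on $X_0$, so they are in the kernel of $\deg$ and in $\NS(X)_0$ by \cite[\S 9.1, Prop.~1.21]{MR1917232}); the other requires the intersection computation $D\cdot V = \deg(D_0)$ for $D$ with no vertical component, which is already done inside the proof of Lemma \ref{gordon}, and one must also check that the splitting of an arbitrary class into vertical-plus-horizontal parts is compatible with the filtration — but since $R$ accounts for all vertical classes modulo $\pi^*\Pic(S)$ and $\Pic^0(X_0)$ accounts for the horizontal part, this is a routine consequence of \eqref{anotherses}. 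Everything else is a formal diagram chase, so no serious difficulty is expected beyond bookkeeping.
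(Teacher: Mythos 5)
Your proposal is correct and follows essentially the same route as the paper: the first sequence is obtained by restricting \eqref{anotherses} to $\Pic(X)_0$ via Lemma \ref{gordon} (including the identification $D\cdot V=\deg(D_0)$), and the second by combining it with Corollary \ref{shioda}(ii) in a diagram chase with $E=B(k)\cap R$. The paper's proof is just a terser version of the same argument, so your write-up merely supplies the bookkeeping the authors leave implicit.
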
 
\begin{proof}   Lemma \ref{gordon} shows that $R \subset  \frac{\Pic(X)_0}{\pi^*\Pic(S)}$. As $A(k)$ is the kernel of the map $\Pic(X) \to \NS(X)$, it follows that $A(k) \subset \Pic(X)_0$. Thus, $B(k)$ is a subgroup of $ \frac{\Pic(X)_0}{\pi^*\Pic(S)}$. 
 
The first exact sequence follows from Lemma \ref{gordon}; the second one follows from Corollary \ref{shioda} (ii).  \end{proof} 

\begin{lemma}\label{product} One has the equality \[ \Delta_{\ar}\left(\frac{\NS(X)_0}{\pi^*\NS(S)}\right) = [B(k)]^2\cdot  \alpha^2\cdot \Delta_{\NT}(J(F))\cdot \prod_{v \in Z}~\Delta_{\ar}(R_v).\]\end{lemma}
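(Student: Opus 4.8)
The plan is to feed the two exact sequences of the preceding lemma into two elementary facts about discriminants and then bookkeep the resulting finite groups. The first fact, call it (A), is: if $f\colon N_{1}\to N_{2}$ is a homomorphism of finitely generated abelian groups with finite kernel and cokernel and the pairing on $N_{1}$ is the pullback along $f$ of a pairing on $N_{2}$ (so that both discriminants are defined), then $\Delta_{\ar}(N_{1})=z(f)^{-2}\,\Delta_{\ar}(N_{2})$. The second fact is relation (\ref{deltann}): a short exact sequence splitting over $\mathbb{Q}$ as an orthogonal direct sum has $\Delta$ multiplicative. Fact (A) is a short computation with a lifted basis of $N_{1}/\tor$, separating the free and torsion contributions to the discriminant; equivalently it follows from the formalism of \S\ref{pairings}, since the natural map $N_{1}\to R\mathrm{Hom}(N_{1},\mathbb{Z})$ attached to the pulled-back pairing factors through $N_{1}\to N_{2}\to R\mathrm{Hom}(N_{2},\mathbb{Z})\to R\mathrm{Hom}(N_{1},\mathbb{Z})$, while $z$ is multiplicative for composites, $z(R\mathrm{Hom}(f,\mathbb{Z}))=z(f)$, and $\Delta(N)=z(N\to R\mathrm{Hom}(N,\mathbb{Z}))^{-1}$.

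Next I record the shape of the intersection pairing on $N:=\NS(X)_{0}/\pi^{*}\NS(S)$. By (\ref{hv}) the classes $H,V$ span a rank-two sublattice of $\NS(X)$ of Gram determinant $-\delta^{2}\neq 0$; since the intersection pairing on $\NS(X)$ is non-degenerate, $\NS(X)_{\mathbb{Q}}$ is the orthogonal direct sum of $\langle H,V\rangle_{\mathbb{Q}}$ and its orthogonal complement, and the quotient map $\NS(X)_{0}\to N$ identifies this complement with $N_{\mathbb{Q}}$, on which the induced pairing is therefore non-degenerate. Moreover $R=\bigoplus_{v\in Z}R_{v}$ is an orthogonal direct sum, components of distinct fibers being disjoint, and the pairing on each $R_{v}$ (see (\ref{Rv})) is negative definite by Zariski's lemma, the fibers of $\pi$ being connected because $\pi_{*}\mathcal{O}_{X}=\mathcal{O}_{S}$; hence $(R/E)_{\mathbb{Q}}\subset N_{\mathbb{Q}}$ carries a non-degenerate form. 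Thus the second exact sequence of the preceding lemma,
\[ 0\To \tfrac{R}{E}\To N\To \tfrac{\Pic^{0}(X_{0})}{B(k)/E}\To 0, \]
splits over $\mathbb{Q}$ as an orthogonal direct sum with all three terms non-degenerate, and (\ref{deltann}) gives
\[ \Delta_{\ar}(N)=\Delta_{\ar}\!\bigl(\tfrac{R}{E}\bigr)\cdot\Delta_{\ar}\!\bigl(\tfrac{\Pic^{0}(X_{0})}{B(k)/E}\bigr). \]

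It remains to evaluate the two factors by (A). The decomposition $R=\bigoplus_{v}R_{v}$ gives $\Delta_{\ar}(R)=\prod_{v\in Z}\Delta_{\ar}(R_{v})$; the surjection $R\to R/E$ has finite kernel $E$ and the pairing on $R$ is the pullback of that on $R/E$, so $\Delta_{\ar}(R/E)=[E]^{2}\prod_{v\in Z}\Delta_{\ar}(R_{v})$. Similarly the N\'eron--Tate pairing restricts along the inclusion $\Pic^{0}(X_{0})\hookrightarrow J(F)$, whose cokernel has order $\alpha$, so $\Delta_{\ar}(\Pic^{0}(X_{0}))=\alpha^{2}\,\Delta_{\NT}(J(F))$; and the surjection $\Pic^{0}(X_{0})\to\Pic^{0}(X_{0})/(B(k)/E)$ has finite kernel $B(k)/E$ of order $[B(k)]/[E]$, so $\Delta_{\ar}\bigl(\Pic^{0}(X_{0})/(B(k)/E)\bigr)=\bigl([B(k)]/[E]\bigr)^{2}\alpha^{2}\,\Delta_{\NT}(J(F))$. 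Substituting the last two computations into the displayed identity, the powers $[E]^{\pm2}$ cancel and we obtain the asserted equality.

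The step I expect to be the crux is not any one calculation but the careful matching of pairings and the non-degeneracy assertions underpinning the use of (\ref{deltann}): that the form in $\Delta_{\ar}(R_{v})$ is the restriction to $R_{v}$ of the intersection pairing on $\NS(X)$ and factors through $R/E\hookrightarrow N$; that the N\'eron--Tate form on $J(F)$ pulls back to the pairing used on $\Pic^{0}(X_{0})$; and that the second exact sequence really splits orthogonally over $\mathbb{Q}$, which is exactly what (\ref{hv}) supplies for the $\langle H,V\rangle$-part and Zariski's lemma for the fibral part $R/E$. Note that although the pairing on $N$ need not be definite, only the orthogonal $\mathbb{Q}$-splitting of the sequence enters (\ref{deltann}).
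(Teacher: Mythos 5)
Your formal skeleton is exactly the paper's: split the second sequence of \eqref{split} orthogonally over $\mathbb Q$, apply \eqref{deltann}, and then track the finite kernels and cokernels, with the $[E]^{\pm 2}$ factors cancelling. Your fact (A) is correct (it is the statement $\Delta(N)=z(N\to R\mathrm{Hom}(N,\Z))^{-1}$ from \S\ref{pairings} combined with multiplicativity of $z$), and all of your index bookkeeping agrees with the paper's computation. Your derivation of the orthogonal splitting is also fine, and slightly slicker than the paper's: since the pairing on each $R_v$ is negative definite, $(R/E)_{\Q}$ is a non-degenerate subspace of $N_{\Q}$, so $N_{\Q}=(R/E)_{\Q}\oplus (R/E)_{\Q}^{\perp}$ and the quotient is identified with the orthogonal complement. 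The paper instead constructs the splitting explicitly, sending $\gamma\in\Pic^0(X_0)$ to $\tilde\gamma=\bar\gamma-\sum_{v\in Z}\psi_v(\gamma)$, where $\bar\gamma$ is the Zariski closure and $\psi_v(\gamma)\in R_v\otimes\Q$ represents the functional $\beta\mapsto\beta\cdot\bar\gamma$; this $\tilde\gamma$ is precisely the orthogonal projection you invoke abstractly.

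The genuine gap is at the point you yourself flag as the crux and then pass over: the line ``$\Delta_{\ar}(\Pic^0(X_0))=\alpha^2\,\Delta_{\NT}(J(F))$.'' The pairing that \eqref{deltann} hands you on the quotient $\Pic^0(X_0)_{\Q}\cong (R/E)_{\Q}^{\perp}$ is the intersection pairing of the projected lifts, $(\gamma,\kappa)\mapsto(\tilde\gamma\cdot\tilde\kappa)\log q$, while $\Delta_{\NT}$ is taken with respect to the N\'eron--Tate height on $J(F)$. These are \emph{a priori} unrelated bilinear forms on the same group, and your sentence ``the N\'eron--Tate pairing restricts along the inclusion $\Pic^0(X_0)\hookrightarrow J(F)$'' only says that the second form restricts to the subgroup; it does not identify it with the first. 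The identification is the one non-formal input of the lemma: Tate's formula $\langle\gamma,\kappa\rangle_{\NT}=-(\tilde\gamma\cdot\tilde\kappa)\log q$ for \emph{good} lifts (those meeting every fibral component with multiplicity zero), cited in the paper from \cite[p.~429]{Tate1966} and \cite[Remark 3.11]{MR3858404}, which gives $\Delta_{\ar}(\Pic^0(X_0))=\Delta_{\NT}(\Pic^0(X_0))$ and hence, via your fact (A) applied to $\Pic^0(X_0)\hookrightarrow J(F)$, the factor $\alpha^2\Delta_{\NT}(J(F))$. Without this identity the argument computes $\Delta_{\ar}(\NS(X)_0/\pi^*\NS(S))$ in terms of the discriminant of an unidentified pairing on $\Pic^0(X_0)$, and the lemma does not follow. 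Everything else in your write-up is correct.
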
 
\begin{proof} The exact sequence (\ref{split}) splits orthogonally over $\mathbb Q$: for any divisor $\gamma$ representing an element of $\Pic(X_0)$, consider its Zariski closure $\bar{\gamma}$ in $X$. Since the intersection pairing on $R_v$ is negative-definite \cite[\S 9.1, Theorem 1.23]{MR1917232}, the linear map $R_v \to \mathbb Z$ defined by $\beta \mapsto \beta\cdot\bar{\gamma}$ is represented by a unique element \[\psi_v(\gamma)\in R_v\otimes\mathbb Q\subset \frac{\NS(X)_0}{\pi^*\NS(S)}\otimes\mathbb Q.\]   Thus, the element
 \[\tilde{\gamma}:= \bar{\gamma}  -\sum_{v\in Z} \psi_v(\gamma)\]
 is {\it good} in the sense of \cite[\S 5, p.~185]{MR528839}: by construction, the divisor $\tilde{\gamma}$ on $X$ intersects every irreducible component of every fiber of $\pi$ with multiplicity zero.  Fix $\gamma, \kappa \in \Pic^0(X_0)$:  viewing them as elements of $J(F)$, one  computes their Neron--Tate pairing (\ref{NTdefinition}); also, one can compute the height pairing of $\tilde{\gamma}$ and $\tilde{\kappa}$ in $\NS(X)$.   These two are related by the identity  \cite[p.~429]{Tate1966} \cite[Remark 3.11]{MR3858404} 
 \[\langle \gamma, \kappa\rangle_{\NT}  = - \langle\tilde{\gamma}, \tilde{\kappa}\rangle_{\ar} = - (\tilde{\gamma}\cdot \tilde{\kappa})\cdot \log q.\]
 This says that 
 \begin{equation}\label{equality}
 \Delta_{\ar} \left(\Pic^0(X_0)\right)= \Delta_{\NT}\left(\Pic^0(X_0)\right).
 \end{equation}  The map
\[\Pic^0(X_0)\otimes\mathbb Q \to \frac{\NS(X)_0}{\pi^*\NS(S)}\otimes\mathbb Q, \qquad \gamma \mapsto \tilde{\gamma}\]
provides an orthogonal splitting of (\ref{split})  (over $\mathbb Q$). So 
\begin{align*}
\Delta_{\ar}\left(\frac{\NS(X)_0}{\pi^*\NS(S)}\right)& \overset{(\ref{deltann})}{=} \Delta_{\ar}\left(\frac{\Pic^0(X_0)}{B(k)/E}\right)\cdot \Delta_{\ar}\left(\frac{R}{E}\right) = \frac{[B(k)]^2}{e^2}\cdot \Delta_{\ar}\left({\Pic^0(X_0)}\right)  \cdot e^2 \Delta_{\ar}(R)\\
&\overset{(\ref{equality})}{=} [B(k)]^2\cdot \Delta_{\NT}\left({\Pic^0(X_0)}\right) \cdot \Delta_{\ar}(R)
\end{align*}
 where $e= [E]$ as the size of $E$. As 
\begin{equation}\label{eq4} 
 \Delta_{\NT}(\Pic^0(X_0)) = \alpha^2\cdot \Delta_{\NT}(J(F)) \quad\text{and} \Delta_{\ar}(R) = \prod_{v\in Z} \Delta_{\ar}(R_v),
\end{equation} 
this proves the lemma. \end{proof} 

With Lemma \ref{product} at hand we are almost ready to compute $\Delta_{\ar}(\NS(X))$. As the intersection pairing on $\NS(X)$ is not definite (Hodge index theorem), we cannot apply (\ref{deltann}). Instead, we use a variant of a lemma of Z.~Yun \cite{Yun}.

\subsubsection{A lemma of Yun} Given a non-degenerate symmetric bilinear pairing $\Lambda \times \Lambda \to \mathbb Z$ on a finitely generated Abelian group $\Lambda$, an isotropic subgroup $\Gamma$, a subgroup $\Gamma'$ containing $\Gamma$ and with finite index in $\Gamma^{\perp}$,  let $\Lambda_0 = \frac{\Gamma'}{\Gamma}$.  We recall from \S \ref{pairings} that  $\Delta(\Lambda) = z(D)^{-1}$ where $D:= \Lambda \to R\textrm{Hom}(\Lambda, \mathbb Z)$ and $\Delta(\Lambda_0) =  {z(D_0)}^{-1}$ where $D_0:= \Lambda_0 \to R\textrm{Hom}(\Lambda_0, \mathbb Z)$. Let $\Delta$ be the discriminant of the induced non-degenerate pairing $\Gamma \times \frac{\Lambda}{\Gamma'} \to \mathbb Z$:
\[\Delta = \frac{1}{z(C)}= \frac{1}{z(C')}, \quad C:= \Gamma \to R\textrm{Hom}\left(\frac{\Lambda}{\Gamma'}, \mathbb Z\right),\quad\text{and}\quad C':= \frac{\Lambda}{\Gamma'} \to R\textrm{Hom}(\Gamma, \mathbb Z).\]

\begin{lemma}[\emph{cf.} \protect{\cite[Lemma 2.12]{Yun}}]\label{yun-lemma}
One has $\Delta(\Lambda) = \Delta(\Lambda_0)\cdot \Delta^2$. 
\end{lemma}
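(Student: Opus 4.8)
The strategy is to express all three discriminants as values of the invariant $z(\cdot)$ on complexes in the derived category $\mathcal D$, and then to produce a distinguished triangle relating the three defining complexes $D$, $D_0$, and (two copies of) $C$, so that the multiplicativity relation (\ref{euler-z}) immediately yields $z(D) = z(D_0)\cdot z(C)^2$, i.e. $\Delta(\Lambda)^{-1} = \Delta(\Lambda_0)^{-1}\cdot \Delta^{-2}$. The first step is to fix clean notation: write $\Lambda^{\vee} = R\mathrm{Hom}(\Lambda,\mathbb Z)$ (and similarly $\Gamma^{\vee}$, etc.), noting that since all groups in sight are finitely generated and the relevant pairings are non-degenerate on the free quotients, each of $D$, $D_0$, $C$, $C'$ is a complex with bounded, finite-order homology, so $z$ is defined on it and (\ref{euler-z}) applies.

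The core step is the homological algebra. Starting from the two short exact sequences
$0 \to \Gamma \to \Gamma' \to \Lambda_0 \to 0$ and $0 \to \Gamma' \to \Lambda \to \Lambda/\Gamma' \to 0$,
apply $R\mathrm{Hom}(-,\mathbb Z)$ to get the dual triangles, and splice. The pairing on $\Lambda$ restricted to $\Gamma$ being isotropic means $\Gamma \subseteq \Gamma^{\perp}$, and the induced map $\Lambda \to \Lambda^{\vee}$ carries $\Gamma'$ into the subobject $(\Lambda/\Gamma')^{\perp}$-dual; concretely, the non-degeneracy of the three pairings (on $\Lambda/\tor$, on $\Lambda_0$, and on $\Gamma \times \Lambda/\Gamma'$) says exactly that $D$, $D_0$, $C$, $C'$ are all quasi-isomorphic to complexes of finite groups. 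I would build a commutative diagram with exact rows whose columns are $D$ (in the middle), with $D_0$ and $C$ (or $C$ and $C'$) on the outside, using that $R\mathrm{Hom}(\Gamma',\mathbb Z)$ sits in a triangle with $R\mathrm{Hom}(\Lambda_0,\mathbb Z)$ and $R\mathrm{Hom}(\Gamma,\mathbb Z)$, and that $\Lambda^{\vee}$ sits in a triangle with $(\Gamma')^{\vee}$ and $(\Lambda/\Gamma')^{\vee}$. Chasing this (the octahedral axiom packages it neatly) produces a triangle
\[
C \To D \To (D_0 \oplus C'[\,?\,]) \To C[1]
\]
or, more carefully, a filtration of $D$ with graded pieces $C$, $D_0$, $C'$; since $z(C) = z(C')$ by the displayed definition of $\Delta$, applying $z$ and (\ref{euler-z}) repeatedly gives $z(D) = z(C)\cdot z(D_0)\cdot z(C') = z(D_0)\cdot z(C)^2$. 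Taking reciprocals and using $\Delta(\Lambda) = z(D)^{-1}$, $\Delta(\Lambda_0) = z(D_0)^{-1}$, $\Delta = z(C)^{-1}$ yields the claim.

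The main obstacle is bookkeeping the identification of the two "off-diagonal" blocks $C$ and $C'$ inside the dévissage of $D$: one must check that the connecting maps in the spliced triangles are precisely the maps $\Gamma \to R\mathrm{Hom}(\Lambda/\Gamma',\mathbb Z)$ and $\Lambda/\Gamma' \to R\mathrm{Hom}(\Gamma,\mathbb Z)$ induced by the pairing, rather than something off by a finite-index correction — this is where the hypotheses "$\Gamma$ isotropic" and "$\Gamma'$ of finite index in $\Gamma^{\perp}$" get used, and where Yun's original argument lives. An alternative, more hands-on route that avoids some of this is to pass to $\mathbb Q$-coefficients to see the ranks match up (so everything is genuinely a complex of finite groups), pick a maximal independent set adapted to the flag $\Gamma \subseteq \Gamma' \subseteq \Lambda$, and compute all four determinants directly by block matrices; the block triangular structure of the Gram matrix then gives $\det = \det(\text{Gram of }\Lambda_0)\cdot \det(\text{anti-diagonal block})^2$, with the index corrections $(\Lambda:\Lambda'')^2$ distributing correctly across the factors. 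I would present the derived-category version as the main argument and remark that it specializes to Yun's lemma, citing \cite[Lemma 2.12]{Yun}.
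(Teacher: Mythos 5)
Your proposal is correct and follows essentially the same route as the paper: the paper realizes your "filtration of $D$ with graded pieces $C$, $D_0$, $C'$" concretely as two maps of triangles (the source filtered by $\Gamma \subseteq \Lambda$ over the dual of $\Gamma' \subseteq \Lambda$, then $\Gamma'/\Gamma \subseteq \Lambda/\Gamma$ over the dual of $\Gamma \subseteq \Gamma'$), applies (\ref{euler-z}) to each, and concludes $z(D) = z(D_0)\cdot z(C)\cdot z(C')$ exactly as you outline. The bookkeeping you flag as the main obstacle is handled in the paper precisely by the deliberate $\Gamma$-versus-$\Gamma'$ mismatch between the rows of those two diagrams.
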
 
\begin{proof} Applying (\ref{euler-z}) to the maps of triangles 
\[\begin{tikzcd}
\Gamma \arrow[r] \arrow[d] &\Lambda \arrow[r]\arrow[d] & \frac{\Lambda}{\Gamma} \arrow[r] \arrow[d] & \Gamma[1 \arrow[d]]\\
R\textrm{Hom} \left( \frac{\Lambda}{\Gamma'}, \mathbb Z \right) \arrow[r,] & R\textrm{Hom} (\Lambda ,\mathbb Z ) \arrow[r,]& R\textrm{Hom} (\Gamma', \mathbb Z)\arrow[r,] &R\textrm{Hom} \left( \frac{\Lambda}{\Gamma'}, \mathbb Z \right) [1]
\end{tikzcd} \]
and
\[\begin{tikzcd}
\frac{\Gamma'}{\Gamma} \arrow[r] \arrow[d] &\frac{\Lambda}{\Gamma} \arrow[r]\arrow[d] & \frac{\Lambda}{\Gamma'} \arrow[r] \arrow[d] & \frac{\Gamma'}{\Gamma}[1 \arrow[d]]\\
R\textrm{Hom} \left( \frac{\Gamma'}{\Gamma}, \mathbb Z \right) \arrow[r,] & R\textrm{Hom} (\Gamma' ,\mathbb Z ) \arrow[r,]& R\textrm{Hom} (\Gamma, \mathbb Z)\arrow[r,] &R\textrm{Hom} \left( \frac{\Gamma'}{\Gamma}, \mathbb Z \right) [1]
\end{tikzcd} \]
shows that $z(D)\cdot z(C)^{-1} = z(D_0)\cdot z(C')$.
\end{proof} 
We can finally compute $\Delta_{\ar}(\NS(X))$. 

\begin{proposition} The following relations hold
\[ \Delta_{\ar}(\NS(X)) = \delta^2\cdot \Delta_{\ar}\left(\frac{\NS(X)_0}{\pi^*\NS(S)}\right)\cdot (\log q)^2 \quad\text{and}\quad  \Delta(\NS(X)) = \delta^2\cdot \Delta\left(\frac{\NS(X)_0}{\pi^*\NS(S)}\right).\]
\end{proposition}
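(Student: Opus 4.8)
The plan is to deduce both identities from Lemma~\ref{yun-lemma}, applied to the lattice $\Lambda = \NS(X)$ equipped with the intersection pairing — which is $\Z$-valued, symmetric and non-degenerate by the Hodge index theorem — with $\Gamma = \pi^*\NS(S)$ and $\Gamma' = \Gamma^{\perp} = \NS(X)_0$. First I would check the hypotheses: $\Gamma$ is the infinite cyclic group generated by $V$, and it is isotropic since $V\cdot V = 0$ by (\ref{hv}); hence $\Gamma \subseteq \Gamma^{\perp} = \Gamma'$, which has (trivially finite) index in $\Gamma^{\perp}$. With this data $\Lambda_0 = \Gamma'/\Gamma = \NS(X)_0/\pi^*\NS(S)$, so Lemma~\ref{yun-lemma} gives
\[\Delta(\NS(X)) = \Delta\!\left(\frac{\NS(X)_0}{\pi^*\NS(S)}\right)\cdot \Delta^2,\]
where $\Delta$ is the discriminant of the induced pairing $\Gamma \times \frac{\NS(X)}{\NS(X)_0} \to \Z$. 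Establishing the second displayed formula of the proposition then amounts to computing $\Delta = \delta$, after which the first (height-pairing) formula follows by a rank count.

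Next I would compute $\Delta$. Since $\NS(X)_0 = V^{\perp}$ by definition, $\NS(X)/\NS(X)_0$ is the image of the homomorphism $\NS(X) \to \Z$, $D \mapsto D\cdot V$; in particular it is infinite cyclic, and the dual map $C' \colon \NS(X)/\NS(X)_0 \to R\textrm{Hom}(\Gamma, \Z) \cong \Z$ is injective. Its image is $\delta\Z$: every vertical divisor lies in $\NS(X)_0$ (proof of Lemma~\ref{gordon}), so $D\cdot V = \deg(D_0)$ for every divisor $D$ on $X$ — reducing, as in the proof of Lemma~\ref{gordon}, to an integral divisor flat over $S$ — and by definition the degrees of $F$-rational divisors on $X_0$ are precisely the multiples of the index $\delta$, with the horizontal divisor $H$ realizing $H\cdot V = \delta$. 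Hence $\textrm{coker}(C') \cong \Z/\delta$, so $z(C') = 1/\delta$ and $\Delta = z(C')^{-1} = \delta$, which gives $\Delta(\NS(X)) = \delta^2\cdot\Delta\bigl(\NS(X)_0/\pi^*\NS(S)\bigr)$.

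Finally I would pass to the height pairing, which on $\NS(X)$ and on each subquotient is simply $\log q$ times the intersection pairing, whence $\Delta_{\ar}(N) = \Delta(N)\cdot(\log q)^{\rank N}$. Since $\rank\NS(X) = \rho(X)$ while $\NS(X)_0 = V^{\perp}$ has corank one in $\NS(X)$ and $\NS(X)_0/\pi^*\NS(S)$ is its further quotient by the rank-one group $\pi^*\NS(S)$, we get $\rank\bigl(\NS(X)_0/\pi^*\NS(S)\bigr) = \rho(X) - 2$ — consistently with the Tate--Shioda relation of Corollary~\ref{shioda}. Multiplying the identity $\Delta(\NS(X)) = \delta^2\,\Delta(\NS(X)_0/\pi^*\NS(S))$ by $(\log q)^{\rho(X)}$ and regrouping $(\log q)^{\rho(X)-2}$ into the right-hand discriminant yields the first formula. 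The one step needing genuine (if short) work is the identification $\NS(X)/\NS(X)_0 \cong \delta\Z$ — equivalently, that the image of $D\mapsto D\cdot V$ is exactly the group of multiples of the index of $X_0$ — together with the verification of the isotropy hypothesis needed to invoke Lemma~\ref{yun-lemma}; everything else is bookkeeping with ranks and the constant $\log q$.
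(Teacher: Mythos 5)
Your proposal is correct and follows essentially the same route as the paper: apply Lemma~\ref{yun-lemma} with $\Lambda = \NS(X)$, $\Gamma = \pi^*\NS(S)$, $\Gamma' = \NS(X)_0$, use $V\cdot V = 0$ from (\ref{hv}) for isotropy and $H\cdot V=\delta$ to get $\Delta=\delta$, and then rescale by $\log q$. You merely spell out details the paper leaves implicit (the identification of the image of $D\mapsto D\cdot V$ with $\delta\Z$ via Lemma~\ref{gordon}, and the rank count giving the extra $(\log q)^2$), all of which check out.
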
 
\begin{proof} Let $\mathbb Z \cong \Gamma = \pi^*\NS(S) \subset \NS(X) =\Lambda$ with $\Gamma' = \NS(X)_0$ and $\Lambda_0 = \frac{\NS(X)_0}{\pi^*\NS(S)}$. Lemma \ref{gordon} implies that
\[ \frac{\Lambda}{\Gamma'} =\frac{\NS(X)}{\NS(X)_0} \cong \mathbb Z \quad\text{and}\quad  C = \Gamma \to \textrm{Hom}\left(\frac{\NS(X)}{\NS(X)_0}, \mathbb Z\right),\]with 
$C$ as in Lemma \ref{yun-lemma}. 
Now  (\ref{hv}) shows that $\pi^*\NS(S)$ is isotropic and $\Delta =\delta$. The result follows from Lemma \ref{yun-lemma}.
\end{proof} 
Combining the previous proposition with Lemma \ref{product} provides the identity 
\begin{equation}\label{rm4}   \Delta_{\ar}(\NS(X)) = \delta^2\cdot  [B(k)]^2\cdot  \alpha^2\cdot \Delta_{\NT}(J(F))\cdot \prod_{v \in Z}~\Delta_{\ar}(R_v) \cdot (\log q)^2.
\end{equation}

For $ v \in S$, we put $\delta_v$ and $\delta'_v$ for the (local) index and period of $X\times F_v$ over the local field $F_v$. 
\begin{theorem}\label{geisser} \cite[Theorem 1.1]{Geisser1} Assume that $\Br(X)$ is finite. The following equality holds:
\begin{equation}\label{eqn-g}
[\Br(X)] \alpha^2\delta^2 = [\Sha(J/F)] \prod_{v \in S} \delta'_v \delta_v.
\end{equation}
\end{theorem}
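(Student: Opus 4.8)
The plan is to deduce \eqref{eqn-g} by matching the special-value formulas of Conjectures~\ref{AT} and~\ref{BSD} term by term, using the unconditional identities \eqref{eq1}, \eqref{rm4}, \eqref{raynaud}, \eqref{rm2} already established, and then reducing the residual discrepancy to a purely local statement at the bad fibres. Since $\Br(X)$ is finite, $\Sha(J/F)$ is finite (Artin--Grothendieck), so every factor occurring in \eqref{eqn-g} and in both special-value formulas is a well-defined positive rational number; moreover $AT(X)$ holds (Artin--Tate--Milne, equivalently the derived-category identification of the two Weil-\'etale special-value complexes underlying the second proof of Theorem~\ref{main}), so \eqref{eqn1} is a true equality, and likewise $BSD(J)$ holds (Kato--Trihan), so \eqref{eqn2} is a true equality. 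Substituting \eqref{eq1} for $P_2^*(X,q^{-1})$ in \eqref{eqn1}, then \eqref{rm4} for $\Delta_{\ar}(\NS(X))$, and finally \eqref{raynaud} to replace $q^{-\alpha(X)}$ by $q^{\chi(S,\Lie\,\mathcal J)-\dim B}$, the global terms $[B(k)]^2$, $(\log q)^2$, $\Delta_{\NT}(J(F))$ and all powers of $q$ cancel against the corresponding terms of \eqref{eqn2}, leaving the numerical identity
\[
[\Br(X)]\cdot\alpha^2\cdot\delta^2\cdot\prod_{v\in Z}\Delta_{\ar}(R_v)\;=\;[\Sha(J/F)]\cdot c(J)\cdot Q_2^*(1).
\]

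Comparing this with the target \eqref{eqn-g}, Theorem~\ref{geisser} becomes equivalent to the identity
\[
\prod_{v\in S}\delta'_v\,\delta_v\;=\;\frac{c(J)\cdot Q_2^*(1)}{\prod_{v\in Z}\Delta_{\ar}(R_v)}.
\]
By the multiplicativity $c(J)=\prod_{v}c_v$, by \eqref{rm2}, and by the vanishing of all relevant factors at $v\notin Z$---where $X_v$ is smooth, $R_v=0$, and (F.~K.~Schmidt) $\delta'_v=\delta_v=1$---this breaks up into one identity for each $v\in Z$, of the form
\[
\delta'_v\,\delta_v\cdot\Delta(R_v)\;=\;c_v\cdot\prod_{i\in G_v}r_i,
\]
any powers of $[k(v):k]$ produced by the difference between intersection numbers on the surface $X$ and on the $k(v)$-curve $X_v$ cancelling those in $Q_2^*(1)$ coming from $\log q_v$ versus $\log q$; here $\Delta(R_v)$ is the discriminant of the negative-definite intersection form on $R_v=\mathbb Z^{G_v}/\mathbb Z$ and $r_i$ is, as in \S\ref{singular}, the number of geometric components of $\Gamma_i$.

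The heart of the argument, and the step I expect to be the main obstacle, is this local identity. The natural input is Raynaud's description of the component group $\Phi_v$ of the N\'eron model of $J$ at $v$ in terms of the intersection matrix $(\Gamma_i\cdot\Gamma_j)_{i,j\in G_v}$ of the special fibre of the minimal regular model, the multiplicities $d_i$ of the $\Gamma_i$ in $X_v$, and the constant-field degrees $r_i$: $\Phi_v$ is (the torsion of) the cokernel of the map $\mathbb Z^{G_v}\to\operatorname{Hom}(\mathbb Z^{G_v},\mathbb Z)$ given by that matrix, so $c_v=[\Phi_v(k_v)]$ must be compared with $\Delta(R_v)$ after passing to the quotient by the class of $X_v=\sum_i d_i\Gamma_i$, while the local period $\delta'_v$ and index $\delta_v$ enter through their expressions as $\gcd$'s of the $d_i$ and of the $d_ir_i$. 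The difficulty is to run this bookkeeping uniformly---non-reduced fibres ($d_i>1$), components that are not geometrically irreducible ($r_i>1$), and wild ramification all at once---keeping track of the interplay of the diagonal entries $\Gamma_i\cdot\Gamma_i$, the off-diagonal intersections, the multiplicities and the constant-field degrees without introducing spurious factors; this is precisely where the earlier analyses of Gordon \cite{MR528839} and Liu--Lorenzini--Raynaud \cite{MR2092767} need a careful study of subgroups of $\NS(X)$, and the same delicacy recurs here locally.

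An alternative, more self-contained route that avoids the conjectures altogether is to run the Leray spectral sequence for $\pi\colon X\to S$ with coefficients in $\mathbb G_m$ and combine it with the Artin--Grothendieck--Raynaud analysis \cite{MR244271} and the divisor-class sequence \eqref{anotherses}, producing a bounded complex of finite groups interpolating between $\Br(X)$, $\Sha(J/F)$ and the local terms supported on $Z$, whose invariant $z(\,\cdot\,)$ equals $1$ by \eqref{euler-z}, giving \eqref{eqn-g} directly. The technical point then migrates to identifying $R^1\pi_*\mathbb G_m$ with a modification of the N\'eron model $\mathcal J$ together with the skyscraper sheaves with stalks $R_v$ at $v\in Z$, and to extracting the index and period contributions from the degree maps on $\Pic(X_0)$ and on $J(F)$---which is, once more, essentially the local computation above.
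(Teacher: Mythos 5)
Your algebra is right and the route is logically valid, but it is genuinely different from the paper's, and in one place you stop short of a complete argument. The paper's proof (\S\,4.4) obtains the ``master equality''
\[
\frac{P_{2}^{\diamond}(X,1)}{[\Br(X)]\cdot\Delta(\NS(X))\cdot q^{-\alpha(X)}}
=\frac{L^{\diamond}(J,1)}{[\Sha(J/F)]\cdot\Delta(J(F))\cdot c(J)\cdot q^{\chi(S,\Lie\,\mathcal J)}}
\]
from the \emph{unconditional} Proposition~\ref{LeadingTerms}, which only asserts that the two Weil-\'etale defects coincide, not that either vanishes; it then cancels exactly as you do, using \eqref{eq1}, \eqref{rm4}, \eqref{rm3} and \eqref{raynaud}. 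You instead obtain the same master equality by asserting that both defects equal $1$, i.e.\ by invoking Milne ($\Br(X)$ finite $\Rightarrow AT(X)$) and Kato--Trihan ($\Sha(J/F)$ finite $\Rightarrow BSD(J)$). That is a legitimate standalone proof of Geisser's formula, but it imports the full strength of those theorems, whereas the paper's point is to stay independent of them (and your parenthetical claim that $AT(X)$ follows ``equivalently'' from the derived-category identification of \S\,\ref{suzuki} is wrong: Proposition~\ref{LeadingTerms} compares the defects without evaluating either). Note also that if Theorem~\ref{geisser} proved this way were then fed into the first proof of Theorem~\ref{main}, that proof would become vacuous, since Milne plus Kato--Trihan already yield Theorem~\ref{main} directly.

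The residual local identity you isolate as ``the main obstacle,''
\[
\delta'_v\,\delta_v\cdot\Delta_{\ar}(R_v)=c_v\cdot(\log q_v)^{m_v-1}\prod_{i\in G_v}r_i ,
\]
is precisely \eqref{rm3}, established in the paper as the Flach--Siebel lemma (via Raynaud's theorem on component groups and Bosch--Liu). Your sketch of how to prove it from Raynaud's description of $\Phi_v$ points in the right direction, but as written this step is not carried out; since the statement is available as \eqref{rm3}, you should cite it rather than leave it open, after which your derivation of \eqref{eqn-g} closes completely.
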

\begin{remark} Note that for $v \in U$, one has $\delta_v = 1 = \delta'_v$ \cite[p.~603]{MR3858404}, \cite[(74)]{flachsiebel} (for $\delta_v=1$), \cite[Proposition (4.1) (a)]{MR244271} ($\delta'_v$ divides $\delta_v$); the basic reason is that if $v\in U$, then $X_v$ has a rational divisor of degree one as $k(v)$ is finite; this divisor lifts to a rational divisor of degree one on $X \times F_v$ by smoothness of  $X_v$. Also, $c_v =1$ \cite[Theorem 1, \S 9.5 p.~264]{BLR}. 
So $c(J) :=\prod_{v\in S}c_v $ satisfies
\begin{equation}\label{rm1} c(J) = \prod_{v\in Z}c_v.\end{equation} 
 \end{remark} 

\begin{lemma} One has
\begin{equation}\label{rm3} c(J)\cdot Q_2^*(1) = \prod_{v \in Z} \delta_v\cdot \delta'_v\cdot \Delta_{\ar}(R_v).\end{equation}
\end{lemma}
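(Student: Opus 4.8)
The plan is to strip off the contributions at the good places and reduce (\ref{rm3}) to a purely local identity at each $v\in Z$, and then to prove that local identity from the geometry of the regular model $\mathcal{X}_v = X\times_S\Spec\mathcal{O}_v$ over the complete local ring $\mathcal{O}_v$ at $v$ (with fraction field $F_v$), together with Raynaud's comparison between the relative Picard functor of $\mathcal{X}_v$ and the N\'eron model of $J$. For the reduction, observe that by (\ref{rm1}) and (\ref{rm2}) the left-hand side of (\ref{rm3}) equals $\prod_{v\in Z} c_v\,(\log q_v)^{m_v-1}\prod_{i\in G_v} r_i$, whereas by definition $\Delta_{\ar}(R_v) = \Delta(R_v)\,(\log q_v)^{m_v-1}$, where $\Delta(R_v)$ is the discriminant of the (negative semidefinite, rank $m_v-1$) intersection pairing on $R_v$, intersection numbers being taken over $k(v)$. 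Thus the factors $(\log q_v)^{m_v-1}$ cancel, and it remains to prove that for each $v\in Z$
\[
c_v\cdot\prod_{i\in G_v} r_i \;=\; \delta_v\,\delta'_v\cdot\Delta(R_v),
\]
which I will call the \emph{local identity}.

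To establish the local identity I would assemble three ingredients. First, a short computation from the definitions of \S\ref{pairings}: writing the fibre cycle $[X_v] = \sum_i d_i\Gamma_i$ with multiplicities $d_i$, the intersection matrix $A_v = (\Gamma_i\cdot\Gamma_j)_{i,j\in G_v}$ is symmetric of rank $m_v-1$ and annihilates the vector $(d_i)_i$, so it descends to $R_v$ with radical the torsion subgroup, and $\Delta(R_v) = |\det A_v^{(i)}|/d_i^2$ for any $i\in G_v$, where $A_v^{(i)}$ deletes the $i$-th row and column. Second, the local form of Proposition \ref{lastlemma}: since $\Pic(\mathcal{O}_v) = \Br(\mathcal{O}_v) = 0$ and $X_{F_v} = X_0\times_F F_v$ is geometrically connected, one obtains $0\to R_v\to\Pic(\mathcal{X}_v)\to\Pic(X_{F_v})\to 0$ with $R_v$ contained in the kernel of the degree map; passing to Picard functors and invoking Raynaud (\cite[\S 9]{BLR}; compare the local analyses in \cite{Geisser1, MR3858404}), the component group $\Phi_v$ of $\mathcal{J}$ gets computed from $A_v$ and the $d_i$ --- hence, by the first ingredient, from $\Delta(R_v)$ --- while the index $\delta_v$ (the generator of $\deg(\Pic(X_{F_v}))\subseteq\mathbb Z$) and the period $\delta'_v$ (the analogous invariant for $F_v$-rational divisor classes) are controlled by the integers $d_i r_i$ and by the Brauer obstruction $\Pic(X_{F_v})\hookrightarrow\Pic_{X_{F_v}/F_v}(F_v)\to\Br(F_v)$ (\cite[Proposition (4.1)]{MR244271}). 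Third, passing from $\overline{k(v)}$-points to $k(v)$-points throughout --- that is, taking Frobenius invariants --- is precisely what produces the factor $\prod_{i\in G_v} r_i$, since Frobenius cyclically permutes the $r_i$ geometric components of each $\Gamma_i$.

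Finally I would organize the three exact sequences in play --- the localization sequence, the degree filtration on $\Pic(\mathcal{X}_v)$, and the N\'eron component sequence --- by the Euler characteristic $z(-)$ of \S\ref{pairings}, applying (\ref{euler-z}) to the induced maps to $R\mathrm{Hom}(-,\mathbb Z)$; the separate contributions are then exactly $\Delta(R_v)$, $c_v$, $\delta_v$, $\delta'_v$ and $\prod_i r_i$, and their multiplicativity yields the local identity, whence (\ref{rm3}) on taking the product over $v\in Z$. The hard part will be this comparison step: matching $\Pic$ of the regular model $\mathcal{X}_v$ with the N\'eron model of its Jacobian, in particular handling the failure of cohomological flatness when $\gcd_i d_i>1$ (the multiplicity correction) and carrying through the bookkeeping of the descent from $\overline{k(v)}$ to $k(v)$, which simultaneously accounts for $\prod_i r_i$ and separates the index $\delta_v$ from the period $\delta'_v$ via the Brauer group of $F_v$. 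Everything else is linear algebra and the formal additivity of $z(-)$.
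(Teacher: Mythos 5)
Your reduction is exactly the paper's: combine (\ref{rm1}) and (\ref{rm2}) to rewrite the left-hand side as $\prod_{v\in Z} c_v\,(\log q_v)^{m_v-1}\prod_{i\in G_v} r_i$, cancel the $(\log q_v)^{m_v-1}$ against $\Delta_{\ar}(R_v)=\Delta(R_v)(\log q_v)^{m_v-1}$, and reduce to the local identity $c_v\prod_{i\in G_v} r_i=\delta_v\,\delta'_v\,\Delta(R_v)$ at each $v\in Z$. That local identity is precisely \cite[Lemma 17]{flachsiebel}, and the paper's entire proof consists of citing it (Flach--Siebel in turn rest on Raynaud's theorem via \cite{MR1717533}) and taking the product over $v\in Z$. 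So the statement you have isolated is the right one, and with that citation your argument closes immediately.

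The gap is that you do not cite it but instead sketch a program for reproving it, and the step you yourself flag as ``the hard part'' is where all of the content lives and is not actually carried out. Relating $\Pic(\mathcal{X}_v)$ to the N\'eron model so as to extract $\Phi_v$ from the intersection matrix $A_v$, while simultaneously producing the correction factors $\delta_v$ and $\delta'_v$, is Raynaud's theorem together with the exact sequences of Bosch--Liu; it genuinely requires handling the failure of cohomological flatness (the case $\gcd_i d_i>1$), the obstruction $\Pic(X_{F_v})\to\Pic_{X_{F_v}/F_v}(F_v)\to\Br(F_v)$ separating period from index, and the Galois descent from $\overline{k(v)}$ that both produces $\prod_i r_i$ and converts $[\Phi_v(\overline{k(v)})]$-type quantities into $c_v=[\Phi_v(k(v))]$. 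Your three ``ingredients'' name the right objects and the right references, and packaging the bookkeeping via the Euler characteristic $z(-)$ of \S\ref{pairings} is a reasonable organizing device, but as written none of the exact sequences in the second and third ingredients is established, so the local identity --- and hence the lemma --- is not proved. Either quote \cite[Lemma 17]{flachsiebel} directly (the paper's route), or commit to writing out the Bosch--Liu/Raynaud comparison in full; the latter is a substantial argument, not linear algebra.
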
 
\begin{proof} By a result of Flach and Siebel \cite[Lemma 17]{flachsiebel} (using Raynaud's theorem \cite[Theorem 5.2]{MR528839} in \cite{MR1717533}), one has 
\[\Delta_{\ar}(R_v) = \frac{c_v}{\delta_v\cdot \delta'_v} \cdot (\log q_v)^{m_v-1}\cdot \prod_{i\in G_v} r_i.\]
 So we find that 
\begin{align*}
\prod_{v \in Z} \delta_v\cdot \delta'_v\cdot \Delta_{\ar}(R_v) &= \prod_{v\in Z}  \left({c_v}\cdot (\log q_v)^{m_v-1}\cdot \prod_{i\in G_v} r_i\right)=\prod_{v\in Z}  {c_v}\cdot \prod_{v\in Z} \left((\log q_v)^{m_v-1}\cdot \prod_{i\in G_v} r_i\right) \\
 &\overset{(\ref{rm1})}{=} c(J)\cdot \prod_{v\in Z}~\left((\log q_v)^{m_v-1}\cdot \prod_{i\in G_v} r_i \right) \overset{(\ref{rm2})}{=} c(J)\cdot Q_2^*(1).
\end{align*}
\end{proof}

\section{First proof of Theorem \ref{main}}\label{core}
\begin{proof}[Proof of Theorem \ref{main}]  By (\ref{rm4}) and (\ref{rm3}), we have 
\[  \Delta_{\ar}(\NS(X)) = \frac{\alpha^2\,\delta^2}{\prod_{v \in Z} \delta_v\cdot \delta'_v}\cdot \Delta_{\NT}(J(F))\cdot c(J)\cdot  [B(k)]^2\cdot Q_2^*(1) \cdot (\log q)^2.\]
From Theorem \ref{geisser}, we have 
\[  [\Br(X)] \cdot \Delta_{\ar}(\NS(X)) = [\Sha(J/F)] \cdot \Delta_{\NT}(J(F))\cdot c(J)\cdot  [B(k)]^2\cdot Q_2^*(1) \cdot (\log q)^2.\]
Further with (\ref{raynaud}), we obtain 
\[
[\Br(X)] \cdot \Delta_{\ar}(\NS(X))\cdot q^{-\alpha(X)} =  [\Sha(J/F)] \cdot \Delta_{\NT}(J(F))\cdot c(J)\cdot q^{\chi(S, \Lie\,\mathcal J) }~.~[B(k)]^2\cdot Q_2^*(1)\cdot q^{-\dim(B)}\cdot (\log q)^2.\]
On the other hand, recall (\ref{eq1})
\[ P_2^*(X,\frac{1}{q})=  L^*(J,1)\cdot [B(k)]^2\cdot Q_2^*(1)\cdot q^{-\dim(B)}\cdot (\log q)^2.\]
The ratio of the previous two equalities gives 
\[ \frac{P_2^*(X,\frac{1}{q})}{ [\Br(X)] \cdot \Delta_{\ar}(\NS(X))\cdot q^{-\alpha(X)} } = \frac{L^*(J,1)}{ [\Sha(J/F)] \cdot \Delta_{\NT}(J(F))\cdot c(J)\cdot q^{\chi(S, \Lie\,\mathcal J) }}. \]
This equality implies Theorem \ref{main}.  \end{proof}

\section{Second proof of Theorem \ref{main}}\label{suzuki}
We will give another more direct proof of Theorem \ref{main} using Weil-\'etale cohomology.
We refer the reader to \cite{MR2135283, Gei04, MR4032302}
for basics about Weil-\'etale cohomology over finite fields.
Throughout this section, we assume that $\Br(X)$ (and hence $\Sha(J / F)$) is finite.

\subsection{Setup}
Let $C \in D^{b}(T_{\et})$ be an object of the bounded derived category of
sheaves of Abelian groups on the small \'etale site $T_{\et}$.
Let $D \in D^{b}(\mathrm{FDVect}_{k})$ be an object of the bounded derived category of
finite-dimensional vector spaces over $k$.
Assume that the Weil-\'etale cohomology $H^{\ast}_{W}(T, C)$ is finitely generated
and the cohomology sheaf $H^{\ast}(C \otimes^{L} \Z / l \Z)$ is finite
in all degrees for all prime numbers $l \nmid q$.
Let $e \colon H^{i}_{W}(T, C) \to H^{i + 1}_{W}(T, C)$ be the map
defined by cup product with the arithmetic Frobenius $\in H^{1}_{W}(T, \Z)$.
It defines a complex
\[\cdots \stackrel{e}{\To} H^{i}_{W}(T, C) \stackrel{e}{\To} H^{i + 1}_{W}(T, C) \stackrel{e}{\To} \cdots\]
with finite cohomology.
Set $C_{\Q_{l}} = R \varprojlim_{n}(C \otimes^{L} \Z / l^{n} \Z) \otimes_{\Z_{l}} \Q_{l}$,
whose cohomologies are finite-dimensional vector spaces over $\Q_{l}$
(by the finiteness of $H^{\ast}(C \otimes^{L} \Z / l \Z)$)
equipped with an action of the geometric Frobenius $\varphi$ of $k$.
Define
\begin{align*}
				Z(C, t)
			&=
				\prod_{i}
				\det(1 - \varphi t \,|\, H^{i}(C_{\Q_{l}}))^{(-1)^{i + 1}},\\
				\rho(C)
			&=
				\sum_{j} (-1)^{j + 1} \cdot j \cdot \rank H^{j}_{W}(T, C),
		\\
				\chi_{W}(C)
			&=
				\chi(H^{\ast}_{W}(T, C), e),\quad\text{and}\\
				\chi(D)
			&=
				\sum_{j}
					(-1)^{j}
					\dim H^{j}(D).
	\end{align*}
Assume that $Z(C, t) \in \Q(t)$ and is independent of $l$.
Define $Q(C, D) \in \Q_{> 0}^{\times} \times (1 - t)^{\Z}$ to be
the leading term of the $(1 - t)$-adic expansion of the function
	\[
		\pm \frac{
			Z(C, t)
			(1 - t)^{\rho(C)}
		}{
			\chi_{W}(C) q^{\chi(D)}
		}
	\]
(the sign is the one that makes the coefficient positive).
It is the defect of a zeta value formula
of the form
	\[
			\lim_{t \to 1}
				Z(C, t)
				(1 - t)^{\rho(C)}
		=
			\pm \chi_{W}(C) q^{\chi(D)}.
	\]
We mention $Q(C, D)$ only when $H^{\ast}_{W}(T, C)$ is finitely generated,
$H^{\ast}(C \otimes^{L} \Z / l \Z)$ is finite and
$Z(C, t) \in \Q(t)$ is independent of $l$.
These conditions are satisfied for the cases of interest below.
We have
\[Q(C[1], D[1]) = Q(C, D)^{-1}.\]
If $(C, D)$, $(C', D')$ and $(C'', D'')$ are pairs as above,
and $C \to C' \to C'' \to C[1]$ and $D \to D' \to D'' \to D[1]$ are distinguished triangles,
then $Q(C', D') = Q(C, D) Q(C'', D'')$.

\subsection{Special cases}

We give two special cases of the above constructions.
First, let $\pi_{X} \colon X_{\et} \to T_{\et}$ be the structure morphism.
Let $P_{2}^{\diamond}(X, 1) (1 - t)^{\rho(X)'}$ be
the leading term of the $(1 - t)$-adic expansion of $P_{2}(X, t / q)$.

\begin{proposition}
	Let $(C, D) = (R \pi_{X, \ast} \Gm[-1], R \Gamma(X, \mathcal{O}_{X}))$.
	Then $H^{\ast}(C \otimes^{\mathrm{L}} \Z / l \Z)$ is finite, $H^{\ast}_{W}(T, C)$ is finitely generated, $Z(C, q^{-s}) = \zeta(X, s + 1)$ and
		\[
				Q(C, D)^{-1}
			=
				\frac{
						P_{2}^{\diamond}(X, 1)
					\cdot
						(1 - t)^{\rho(X)' - \rho(X)}
				}{
						[\Br(X)]
					\cdot
						\Delta(\NS(X))
					\cdot
						q^{- \alpha(X)}
				}.
		\]
	In particular, the statement $Q(C, D) = 1$ is equivalent to Conjecture \ref{AT}.
\end{proposition}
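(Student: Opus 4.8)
I would proceed by first checking the three running hypotheses (so that $Q(C, D)$ is defined), then computing the $(1 - t)$-adic leading term of $Q(C, D)^{-1}$, then identifying the Weil--\'etale Euler characteristic $\chi_{W}(C)$ with the Artin--Tate data, and finally reading off the equivalence with Conjecture \ref{AT}.

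\emph{Step 1: the hypotheses and $Z(C, t)$.} By the Leray spectral sequence for $\pi_{X} \colon X_{\et} \to T_{\et}$ one has $H^{i}_{W}(T, R \pi_{X, \ast} \Gm[-1]) = H^{i - 1}_{W}(X, \Gm)$. Since multiplication by $l^{n}$ is an epimorphism $\Gm \to \Gm$ of \'etale sheaves with kernel $\mu_{l^{n}}$, the Kummer sequence gives $\Gm[-1] \otimes^{\mathrm{L}} \Z / l^{n} \Z \cong \mu_{l^{n}}$, hence $C \otimes^{\mathrm{L}} \Z / l^{n} \Z \cong R \pi_{X, \ast} \mu_{l^{n}}$; so $H^{i}(C \otimes^{\mathrm{L}} \Z / l \Z) \cong H^{i}_{\et}(X \times_{T} \bar{T}, \mu_{l})$ is finite for $l \nmid q$, and in the limit $C_{\Q_{l}} \cong R \pi_{X, \ast} \Q_{l}(1)$, so that $H^{i}(C_{\Q_{l}}) \cong H^{i}_{\et}(X \times_{T} \bar{T}, \Q_{l})(1)$ and $\det(1 - \varphi t \mid H^{i}(C_{\Q_{l}})) = P_{i}(X, t / q)$. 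Thus $Z(C, t) = \prod_{i} P_{i}(X, t / q)^{(-1)^{i + 1}}$ lies in $\Q(t)$, is independent of $l$ by Grothendieck and Deligne, and $Z(C, q^{-s}) = \prod_{i} P_{i}(X, q^{-(s + 1)})^{(-1)^{i + 1}} = \zeta(X, s + 1)$. Finally $H^{\ast}_{W}(X, \Gm)$ is finitely generated: $H^{0}_{W}(X, \Gm) = k^{\times}$ and $H^{1}_{W}(X, \Gm) = \Pic(X)$ (by Lang's theorem, $\Br(k) = 0$ and (\ref{eq7})), while for $i \geq 2$ the $W_{k}$-hypercohomology of $R \Gamma_{\et}(X \times_{T} \bar{T}, \Gm)$ presents $H^{i}_{W}(X, \Gm)$ as an extension of $H^{i}_{\et}(X \times_{T} \bar{T}, \Gm)^{W_{k}}$ by $H^{i - 1}_{\et}(X \times_{T} \bar{T}, \Gm)_{W_{k}}$; as $A(\bar{k})_{W_{k}} = 0$ (Lang) and $H^{\geq 2}_{\et}(X \times_{T} \bar{T}, \Gm)$ is torsion --- and these torsion groups are \emph{finite} exactly because $\Br(X)$ is finite (cf.\ \cite{MR2135283, MR4032302}) --- the claim follows. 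Hence $Q(C, D)$ is defined.

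\emph{Step 2: the leading term.} From $P_{0}(X, t) = 1 - t$, $P_{4}(X, t) = 1 - q^{2} t$ and Poincar\'e duality $P_{3}(X, t) = P_{1}(X, q t)$ (the second equality in (\ref{zetaX})) we get $Z(C, t) = P_{1}(X, t / q) \, P_{1}(X, t) / \bigl( (1 - t / q)(1 - q t) \, P_{2}(X, t / q) \bigr)$. Near $t = 1$ the only vanishing factor is $P_{2}(X, t / q)$, with leading term $P_{2}^{\diamond}(X, 1)(1 - t)^{\rho(X)'}$ by definition; by (\ref{picard}) and (\ref{fq-points}), $P_{1}(X, 1/q) \, P_{1}(X, 1) = P_{1}(A, 1/q) \, P_{1}(A, 1) = [A(k)]^{2} q^{-\dim A}$, while the constant $(1 - 1/q)(1 - q) = -(q - 1)^{2}/q$ is nonzero. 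The same spectral sequence gives $\rank H^{i}_{W}(X, \Gm) = 0, \rho(X), \rho(X), 0, 0, \dots$ for $i = 0, 1, 2, \dots$ (using $A(\bar{k})_{W_{k}} = 0$, that $H^{\geq 3}_{\et}(X \times_{T} \bar{T}, \Gm)$ is torsion, and $\rank N_{W_{k}} = \rank N^{W_{k}}$ for finitely generated $N$), hence $\rho(C) = -2 \rho(X) + 3 \rho(X) = \rho(X)$. As $\chi(D) = \chi(X, \mathcal{O}_{X})$, collecting terms yields
\[
Q(C, D)^{-1} = \pm\, \frac{ (1 - 1/q)(1 - q) \cdot P_{2}^{\diamond}(X, 1) \cdot \chi_{W}(C) \cdot q^{\chi(X, \mathcal{O}_{X})} }{ P_{1}(X, 1/q) \cdot P_{1}(X, 1) } \cdot (1 - t)^{\rho(X)' - \rho(X)},
\]
the sign being the one forced so that the coefficient is positive.

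\emph{Step 3: identifying $\chi_{W}(C)$ --- the main obstacle.} The complex $(H^{\ast}_{W}(T, C), e)$ is
\[
0 \To k^{\times} \xrightarrow{\ e\ } \Pic(X) \xrightarrow{\ e\ } H^{2}_{W}(X, \Gm) \xrightarrow{\ e\ } H^{3}_{W}(X, \Gm) \xrightarrow{\ e\ } H^{4}_{W}(X, \Gm) \To 0
\]
(in degrees $1$ through $5$), and one must show $\chi_{W}(C) = \pm [A(k)]^{2} / \bigl( (q - 1)^{2} \, [\Br(X)] \, \Delta(\NS(X)) \bigr)$. Cup product with the arithmetic Frobenius acts on each $W_{k}$-hypercohomology layer as the composite $M^{W_{k}} \hookrightarrow M \to M_{W_{k}}$ of inclusion and projection; in particular it annihilates $k^{\times} = H^{0}_{W}(X, \Gm)$ (since $(\bar{k}^{\times})_{W_{k}} = 0$), whereas modulo torsion $e \colon \Pic(X) \to H^{2}_{W}(X, \Gm)$ is identified, via Weil--\'etale Poincar\'e duality for the surface $X$, with the map $\NS(X)/\tor \to R\textrm{Hom}(\NS(X), \Z)$ attached to the intersection pairing, whose $z$-invariant is $\Delta(\NS(X))^{-1}$ (using $\Pic(X)/\tor \cong \NS(X)/\tor$ from (\ref{eq7})). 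The finite groups in $H^{2}_{W}$, $H^{3}_{W}$ and $H^{4}_{W}$, combined with the Hochschild--Serre spectral sequence for $X \times_{T} \bar{T} \to X$ and the vanishings $H^{1}(G_{k}, \bar{k}^{\times}) = 0$ and $\Br(k) = 0$, collapse to $[\Br(X)]^{-1}$; the factor $k^{\times}$ in degree $1$, together with a finite group of order $q - 1$ in degree $5$ dual to it (coming from $H^{4}_{\et}(X \times_{T} \bar{T}, \Gm)^{W_{k}}$), produces $(q - 1)^{-2}$; and $A(k) = \ker(\Pic(X) \to \NS(X)) = A(\bar{k})^{W_{k}}$ with its dual produces $[A(k)]^{2}$. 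I expect this identification --- matching cup-with-Frobenius to the intersection form and pinning down each finite correction factor --- to be the hardest part; it is the Weil--\'etale counterpart of the computations in \S\ref{pairings} and is carried out as in \cite{MR2135283, MR4032302}, and it is precisely here that the finiteness of $\Br(X)$ (equivalently of $\Sha(J / F)$) is used.

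\emph{Step 4: conclusion.} Substituting this value of $\chi_{W}(C)$ into the formula of Step 2, using $P_{1}(X, 1/q) \, P_{1}(X, 1) = [A(k)]^{2} q^{-\dim A}$, $(1 - 1/q)(1 - q) = -(q - 1)^{2}/q$ and $\alpha(X) = \chi(X, \mathcal{O}_{X}) - 1 + \dim A$, the factors $[A(k)]^{2}$, $(q - 1)^{2}$ and $q^{\dim A}$ all cancel and we obtain
\[
Q(C, D)^{-1} = \frac{ P_{2}^{\diamond}(X, 1) \cdot (1 - t)^{\rho(X)' - \rho(X)} }{ [\Br(X)] \cdot \Delta(\NS(X)) \cdot q^{-\alpha(X)} },
\]
with positive coefficient since $P_{2}^{\diamond}(X, 1)$ and $\Delta(\NS(X))$ share the sign $(-1)^{\rho(X) - 1}$. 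As $Q(C, D) \in \Q_{> 0}^{\times} \times (1 - t)^{\Z}$, the equality $Q(C, D) = 1$ holds if and only if $\rho(X)' = \rho(X)$, i.e.\ $\ord_{s = 1} P_{2}(X, q^{-s}) = \rho(X)$, and $P_{2}^{\diamond}(X, 1) = [\Br(X)] \cdot \Delta(\NS(X)) \cdot q^{-\alpha(X)}$; multiplying the latter through by $(\log q)^{\rho(X)}$ and using $P^{\ast}_{2}(X, q^{-1}) = P_{2}^{\diamond}(X, 1)(\log q)^{\rho(X)}$ and $\Delta_{\ar}(\NS(X)) = \Delta(\NS(X))(\log q)^{\rho(X)}$ turns it into (\ref{eqn1}). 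Hence $Q(C, D) = 1$ is equivalent to Conjecture \ref{AT} under the standing hypothesis that $\Br(X)$ is finite.
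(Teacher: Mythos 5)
Your strategy is the same as the paper's: verify the running hypotheses, extract the $(1-t)$-leading term of $Z(C,t)$ from \eqref{zetaX}, \eqref{picard} and \eqref{fq-points}, identify $\chi_{W}(C)$ with $[A(k)]^{2}/\bigl([\Br(X)]\cdot\Delta(\NS(X))\cdot(q-1)^{2}\bigr)$, compute $\rho(C)=\rho(X)$, and assemble via \eqref{alphax}. Your Steps 2 and 4 reproduce the paper's computation exactly. Two points, however, deserve attention. First, in Step 1 you justify finite generation of $H^{\ast}_{W}(X,\Gm)$ by claiming that the torsion groups $H^{\ge 2}_{\et}(X\times_{T}\bar{T},\Gm)$ are finite because $\Br(X)$ is; this is not true in general, since $\Br(X\times_{T}\bar{T})$ contains a divisible part of corank $b_{2}-\rho(X\times_{T}\bar{T})$ for each $\ell\nmid q$. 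What finiteness of $\Br(X)$ actually yields is the Tate conjecture for divisors on $X$, and the finite generation of $H^{\ast}_{W}(X,\Z(1))\cong H^{\ast}_{W}(X,\Gm[-1])$ is then the content of \cite[Theorems~8.4 and~9.3]{Gei04}, which is the route the paper takes. Second, your Step 3 --- the identification of $\chi_{W}(C)$ --- is the crux and is asserted rather than proved: the formula $\chi_{W}(C)=\prod_{i}[H^{i}_{W}(X,\Z(1))_{\mathrm{tor}}]^{(-1)^{i}}\cdot R^{-1}$ with $R$ the regulator of the cup-product (intersection) pairing, the identifications $H^{2}_{W}\cong\Pic(X)$ and $H^{3}_{W,\mathrm{tor}}\cong\Br(X)$, and the Pontryagin duality between $H^{i}_{W,\mathrm{tor}}$ and $H^{6-i}_{W,\mathrm{tor}}$ (which supplies your ``dual'' groups in degrees $4$ and $5$) are precisely \cite[Theorems~7.5 and~9.1, Proposition~7.4]{Gei04} and \cite[Remark~3.3]{Gei18}; the references you give do not contain the torsion duality in the form needed. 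With those specific citations substituted for your heuristic accounting, your argument becomes the paper's proof.
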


\begin{proof}
	We have $H^{\ast}_{W}(T, C) \cong H^{\ast}_{W}(X, \Gm[-1]) \cong H^{\ast}_{W}(X, \Z(1))$.
	The finiteness assumption on $\Br(X)$ implies the Tate conjecture for divisors on $X$
	and hence the finite generation of $H^{\ast}_{W}(X, \Z(1))$
	by \cite[Theorems~8.4 and~9.3]{Gei04}.
	The object $C \otimes^{\mathrm{L}} \Z / l \Z \cong R \pi_{X, \ast} \Z / l \Z(1) \in D^{b}(T_{\et})$ is constructible
	and hence its cohomologies are finite.
	We have $H^{i}(C_{\Q_{l}}) \cong R^{i} \pi_{X, \ast} \Q_{l}(1)$,
	which is the vector space $H_{\et}^{i}(X \times_{k} \Bar{k}, \Q_{l}(1))$ equipped with the natural Frobenius action.
	It follows that $Z(C, q^{-s}) = \zeta(X, s + 1)$.
	
	We calculate $Q(C, D)^{-1}$.
	By \eqref{zetaX}, \eqref{picard} and \eqref{fq-points},
	the leading term of the $(1 - t)$-adic expansion of $Z(C, t)$ is
		\begin{equation} \label{LeadZeta}
			- \frac{
				[A(k)]^{2}
			}{
				P_{2}^{\diamond}(X, 1) \cdot (q - 1)^{2} \cdot q^{\dim A - 1} \cdot (1 - t)^{\rho(X)'}
			}.
		\end{equation}
	By \cite[Theorems 7.5 and 9.1]{Gei04}, we have
		\[
				\chi_{W}(C)
			=
					\prod_{i}
						[H_{W}^{i}(X, \Z(1))_{\mathrm{tor}}]^{(-1)^{i}}
				\cdot
					R^{-1},
		\]
	where $R$ is the determinant of the pairing
		\[
				H_{W}^{2}(X, \Z(1))
			\times
				H_{W}^{2}(X, \Z(1))
			\stackrel{\cup}{\To}
				H_{W}^{4}(X, \Z(2))
			\To
				H_{\et}^{4}(X \times_{k} \Bar{k}, \Z(2))
			\cong
				\mathrm{CH}^{2}(X \times_{k} \Bar{k})
			\stackrel{\deg}{\longrightarrow}
				\Z.
		\]
	We have $H_{W}^{n}(X, \Z(1)) = 0$ for $n > 5$ by \cite[Theorem 7.3]{Gei04} and for $n < 1$ obviously.
	Also
		\[
					H_{W}^{1}(X, \Z(1))
				\cong
					k^{\times},
			\quad
					H_{W}^{2}(X, \Z(1))
				\cong
					\Pic(X),
			\quad\text{and}\quad 
					H_{W}^{3}(X, \Z(1))_{\mathrm{tor}}
				\cong
					\Br(X)
		\]
	by \cite[Proposition 7.4 (c) and (d)]{Gei04}.
	By \cite[Remark 3.3]{Gei18},
	the group $H_{W}^{i}(X, \Z(1))_{\mathrm{tor}}$ is Pontryagin dual to
	$H_{W}^{6 - i}(X, \Z(1))_{\mathrm{tor}}$ for any $i$.
	The above pairing defining $R$ can be identified with the intersection pairing
	$\Pic(X) \times \Pic(X) \to \Z$.
	Thus, with \eqref{eq7}, we have
		\begin{equation} \label{WetEuler}
				\chi_{W}(C)
			=
				\frac{
					[A(k)]^{2}
				}{
					[\Br(X)] \cdot \Delta(\NS(X)) \cdot (q - 1)^{2}
				}.
		\end{equation}
	Since the rank of $H_{W}^{i}(X, \Z(1))$ is $\rho(X)$ for $i = 2, 3$ and zero otherwise
	by \cite[Proposition 7.4 (c) and (d)]{Gei04},
	we have
		\begin{equation} \label{PicardNum}
			\rho(C) = \rho(X).
		\end{equation}
	Combining \eqref{alphax}, \eqref{LeadZeta}, \eqref{WetEuler} and \eqref{PicardNum},
	we get the desired formula for $Q(C, D)^{-1}$.
\end{proof}

Next, let $\pi_{S} \colon S_{\et} \to T_{\et}$ be the structure morphism.
Let $L^{\diamond}(J, 1) (1 - q^{- s})^{r'}$ be
the leading term of the $(1 - q^{- s})$-adic expansion of $L(J, s + 1)$.
Let $\Delta(J(F))$ be the discriminant of the pairing
$(\gamma, \kappa) \mapsto \langle \gamma, \kappa \rangle_{\NT} / \log q$ on $J(F)$.

\begin{proposition}
	Let $(C, D) = (R \pi_{S, \ast} \mathcal{J}[-1], R \Gamma(S, \Lie\,\mathcal{J}))$.
	Then $H^{\ast}(C \otimes^{\mathrm{L}} \Z / l \Z)$ is finite, $H^{\ast}_{W}(T, C)$ is finitely generated,
	$Z(C, q^{-s}) = L(J, s + 1)$ and
		\[
				Q(C, D)
			=
				\frac{
						L^{\diamond}(J, 1)
					\cdot
						(1 - t)^{r' - r}
				}{
						[\Sha(J / F)]
					\cdot
						\Delta(J(F))
					\cdot
						c(J)
					\cdot
						q^{\chi(S, \Lie\,\mathcal{J})}
				}.
		\]
	In particular, the statement $Q(C, D) = 1$ is equivalent to Conjecture \ref{BSD}.
\end{proposition}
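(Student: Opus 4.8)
The plan is to run the same machine as in the previous proposition, now over the base curve $S$ with the N\'eron model $\mathcal{J}$ playing the role that $\Gm$ played over $X$: one replaces Geisser's Weil-\'etale description of divisors on the surface by the analogous duality statement for $\mathcal{J}$ over the curve, and then matches the resulting Weil-\'etale Euler characteristic term by term with the arithmetic of $J$. The homological preliminaries are routine. Since $S$ is proper over $k$ the morphism $\pi_{S}$ is proper, and $\mathcal{J}$ is a smooth separated $S$-group scheme, so for every prime $l \nmid q$ the complex $\mathcal{J} \otimes^{\mathrm{L}} \Z / l\Z \in D^{b}(S_{\et})$ has constructible cohomology (the sheaves $\mathcal{J}[l]$ and $\mathcal{J}/l$, finite away from $Z$ and controlled along $Z$ by the component groups); hence $C \otimes^{\mathrm{L}} \Z / l\Z \cong R\pi_{S,\ast}(\mathcal{J} \otimes^{\mathrm{L}} \Z / l\Z)[-1]$ is constructible on $T_{\et}$ and has finite cohomology in all degrees. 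For the finite generation of $H^{\ast}_{W}(T,C) \cong H^{\ast - 1}_{W}(S,\mathcal{J})$ I would invoke the \emph{Weil-\'etale duality theory for N\'eron models over curves} (\cite{MR4032302}): under the running hypothesis that $\Sha(J/F)$ is finite it shows that $H^{\ast}_{W}(S,\mathcal{J})$ is concentrated in degrees $0$ and $1$, with $H^{0}_{W}(S,\mathcal{J}) = \mathcal{J}(S) = J(F)$ finitely generated of rank $r$, with $H^{1}_{W}(S,\mathcal{J})$ finitely generated of rank $r$ whose torsion subgroup is built from $\Sha(J/F)$ and the local component groups $\Phi_{v}(k_{v})$, and with the cup-product pairing $H^{0}_{W}(S,\mathcal{J}) \times H^{1}_{W}(S,\mathcal{J}) \to H^{3}_{W}(S,\Z(1)) \cong \Z$ coming from the autoduality of $J$ equal, up to the normalization by $\log q$, to the N\'eron--Tate pairing \eqref{NTdefinition}. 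In particular $\rho(C) = -r$: only $H^{1}_{W}(T,C)$ and $H^{2}_{W}(T,C)$ are of positive rank, each of rank $r$, contributing $(-1)^{1+1}\cdot 1\cdot r + (-1)^{2+1}\cdot 2\cdot r = -r$.

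To identify $Z(C,t)$ I would compute $C_{\Q_{l}} = R\pi_{S,\ast}(V_{l}\mathcal{J})$, where $V_{l}\mathcal{J}$ is the rational Tate module sheaf of $\mathcal{J}$, which rationally coincides with $j_{\ast}V_{l}J$ for $j\colon U \hookrightarrow S$; then the same local-to-global bookkeeping used in the proof of \eqref{eq9} — where $L_{v}(J,q_{v}^{-s})$ was matched with $P_{1}(X_{v},q_{v}^{-s})$ and the local factors were assembled into $L(J,s)$ — gives $Z(C,q^{-s}) = L(J,s+1)$, and at the same time $Z(C,t) \in \Q(t)$ is independent of $l$. Now I would compute $Q(C,D)$ exactly as in the previous proof. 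Because $L(J,\cdot)$ is a polynomial in $q^{-s}$ (Grothendieck), the leading term of the $(1-t)$-adic expansion of $Z(C,t)$ is simply $L^{\diamond}(J,1)(1-t)^{r'}$, with no ancillary factors; in contrast to \eqref{LeadZeta}, this is why the factor $q^{\chi(S,\Lie\,\mathcal{J})}$ of Conjecture \ref{BSD} enters the final formula directly, rather than with the shift that produced $q^{-\alpha(X)}$ on the surface side. One has $\chi(D) = \chi(S,\Lie\,\mathcal{J})$ by the definition of $D$, and the Weil-\'etale Euler characteristic, by the analogue for $\mathcal{J}$ of \cite[Theorems~7.5 and~9.1]{Gei04} furnished by the duality theory above, evaluates to $\chi_{W}(C) = \bigl([\Sha(J/F)]\cdot\Delta(J(F))\cdot c(J)\bigr)^{-1}$: the torsion of $H^{1}_{W}(S,\mathcal{J})$ supplies $[\Sha(J/F)]\cdot c(J)$, and together with $H^{0}_{W}(S,\mathcal{J})_{\tor} = J(F)_{\tor}$ it converts the determinant of the cup-product pairing into the discriminant $\Delta(J(F))$. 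Feeding $\rho(C) = -r$, the leading term of $Z(C,t)$, $\chi_{W}(C)$ and $\chi(D)$ into the definition of $Q(C,D)$ yields the displayed formula. Finally, unwinding the normalizations $L^{\diamond}(J,1) = L^{*}(J,1)/(\log q)^{r}$ and $\Delta(J(F)) = \Delta_{\NT}(J(F))/(\log q)^{r}$ shows that $Q(C,D) = 1$ — that is, $r' = r$ together with the equality of leading coefficients — is exactly the assertion \eqref{eqn2}, so $Q(C,D) = 1$ is equivalent to Conjecture \ref{BSD}. (The comparison \eqref{raynaud} plays no role in this proposition; it enters only afterwards, when the two propositions are combined to deduce Theorem \ref{main}.)

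The step I expect to be the main obstacle is precisely the input I have treated as a black box: the Weil-\'etale duality for the N\'eron model $\mathcal{J}$ over $S$, together with the explicit identification of $H^{1}_{W}(S,\mathcal{J})$ and of its torsion subgroup in terms of $\Sha(J/F)$ and the local component groups, and of the cup-product pairing in terms of the N\'eron--Tate height. This is the counterpart, for $\mathcal{J}$ over the curve $S$, of Geisser's Weil-\'etale analysis of divisors on the surface $X$ used in the previous proposition, and it is where the finiteness of $\Sha(J/F)$ and the local structure of $\mathcal{J}$ at the bad places genuinely enter. Once it is available, everything else is the same formal manipulation of the invariant $Q(-,-)$ as on the surface side.
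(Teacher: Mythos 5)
Your proposal follows essentially the same route as the paper: both reduce everything to the Weil--\'etale BSD machinery for the N\'eron model over $S$ (finite generation of $H^{\ast}_{W}(S,\mathcal{J})$ under finiteness of $\Sha(J/F)$, the rank computation giving $\rho(C)=-r$, and the Euler-characteristic formula for $\chi_{W}(C)$, all from \cite{MR4032302}), together with Schneider's identification of $H^{\ast}_{\et}(S\times_{k}\bar{k}, V_{l}\mathcal{J})$ with the $L$-function, which is exactly \cite[Satz 1]{Sch82} that the paper cites. One slip: you assert $\chi_{W}(C)=\bigl([\Sha(J/F)]\cdot\Delta(J(F))\cdot c(J)\bigr)^{-1}$, but with $\rho(C)=-r$ and leading term $L^{\diamond}(J,1)(1-t)^{r'}$ for $Z(C,t)$, the definition of $Q(C,D)$ then yields $[\Sha(J/F)]\cdot\Delta(J(F))\cdot c(J)$ in the \emph{numerator}, not the denominator; the correct normalization (note the shift $[-1]$ in $C$ inverts the multiplicative Euler characteristic of the $e$-complex) is $\chi_{W}(C)=[\Sha(J/F)]\cdot\Delta(J(F))\cdot c(J)$, as in \cite[Proposition 8.3]{MR4032302}. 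With that corrected, your computation, including the $(\log q)^{r}$ bookkeeping relating $L^{\diamond}$, $\Delta(J(F))$ to $L^{*}$, $\Delta_{\NT}(J(F))$ and the observation that \eqref{raynaud} is not needed here, matches the paper's proof.
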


\begin{proof}
	We have $H^{\ast}_{W}(T, C) \cong H_{W}^{\ast - 1}(S, \mathcal{J})$.
	The finiteness assumption of $\Sha(J / F)$ implies
	the finite generation of $H_{W}^{\ast}(S, \mathcal{J})$
	by \cite[Proposition 6.4]{MR4032302}.
	We have $C \otimes^{\mathrm{L}} \Z / l \Z \cong R \pi_{S, \ast}(\mathcal{J} \otimes^{L} \Z / l \Z)[-1]$.
	By the paragraph before the proof of \cite[Proposition 9.2]{MR4032302}
	and the first displayed equation in the proof of \cite[Proposition~9.2]{MR4032302},
	we know that $\mathcal{J} \otimes^{\mathrm{L}} \Z / l \Z \in D^{b}(S_{\et})$ is constructible.
	Hence $H^{\ast}(C \otimes^{\mathrm{L}} \Z / l \Z)$ is finite.
	We also have $H^{i}(C_{\Q_{l}}) \cong R^{i} \pi_{S, \ast} V_{l}(\mathcal{J})$
	(where $V_{l}$ denotes the $l$-adic Tate modules tensored with $\Q_{l}$),
	which is the vector space $H_{\et}^{i}(S \times_{k} \Bar{k}, V_{l}(\mathcal{J}))$
	equipped with the natural Frobenius action.
	Hence we have $Z(C, q^{-s}) = L(J, s + 1)$ by \cite[Satz 1]{Sch82}.
	We have
		\[
				\chi_{W}(C)
			=
					[\Sha(J / F)]
				\cdot
					\Delta(J(F))
				\cdot
					c(J)
		\]
	by \cite[Proposition 8.3]{MR4032302}.
	By \cite[Proposition 7.1]{MR4032302},
	the rank of $H_{W}^{i}(S, \mathcal{J})$ is $r$ for $i = 0, 1$ and zero otherwise.
	Hence $\rho(C) = - r$.
	The formula for $Q(C, D)$ follows.
\end{proof}

\subsection{Comparison}
Now Theorem \ref{main} follows from the following

\begin{proposition} \label{LeadingTerms}
	One has
		\[
				Q(R \pi_{X, \ast} \Gm[-1], R \Gamma(X, \mathcal{O}_{X}))^{-1}
			=
				Q(R \pi_{S, \ast} \mathcal{J}[-1], R \Gamma(S, \Lie\,\mathcal{J})).
		\]
\end{proposition}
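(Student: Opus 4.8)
The plan is to exploit the two formal properties of $Q(-,-)$ from the setup: multiplicativity over distinguished triangles of pairs, and $Q(C[1],D[1]) = Q(C,D)^{-1}$. It therefore suffices to connect the pairs $(R\pi_{X,*}\Gm[-1], R\Gamma(X,\mathcal O_{X}))$ and $(R\pi_{S,*}\mathcal J[-1], R\Gamma(S,\Lie\,\mathcal J))$, up to a single odd shift, by a chain of distinguished triangles whose remaining pairs $(E,E')$ all satisfy $Q(E,E') = 1$; the odd shift is the source of the inverse in the statement. First I would factor $\pi_{X}$ as $X \xrightarrow{\pi} S \xrightarrow{\pi_{S}} T$, so that $R\pi_{X,*}\Gm[-1] = R\pi_{S,*}(R\pi_{*}\Gm[-1])$ and $R\Gamma(X,\mathcal O_{X}) = R\Gamma(S,R\pi_{*}\mathcal O_{X})$. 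Since the formation of $Z(-,t)$, $\rho$, $\chi_{W}$ and $\chi$ is compatible with $R\pi_{S,*}$ (proper base change for torsion coefficients, $H^{*}_{W}(T,R\pi_{S,*}-) = H^{*}_{W}(S,-)$, and the Leray spectral sequence on the coherent side), it is enough to compare, on $S_{\et}$, the pair $(R\pi_{*}\Gm[-1], R\pi_{*}\mathcal O_{X})$ with $(\mathcal J[-1], \Lie\,\mathcal J)$; the hypotheses making $Q$ defined come from the two Propositions above and the assumed finiteness of $\Br(X)$, equivalently of $\Sha(J/F)$.

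The core is a morphism $R\pi_{*}\Gm[-1] \to \mathcal J[-1]$ in $D^{b}(S_{\et})$ together with an identification of its cone. Over $U = S\setminus Z$ the map $\pi$ is a smooth proper relative curve, so $\pi_{*}\Gm = \Gm$, $R^{1}\pi_{*}\Gm = \Pic_{X_{U}/U}$, $R^{i}\pi_{*}\Gm = 0$ for $i\ge 2$, and — each fibre $X_{v}$ with $v\in U$ carrying a degree-one rational divisor over its finite residue field — the sequence $0 \to \mathcal J_{U} \to \Pic_{X_{U}/U} \to \Z_{U} \to 0$ is exact. Thus $R\pi_{*}\Gm[-1]|_{U}$ is an iterated extension of $\Gm_{U}[-1]$, $\Z_{U}[-2]$ and $\mathcal J_{U}[-2]$; the Jacobian summand sits in cohomological degree $2$ here but in degree $1$ in $\mathcal J[-1]$, which is exactly where the $[-1]$ shift, hence the inverse, comes from. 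Globalizing via the localization triangle $j_{!}(-|_{U}) \to (-) \to \bigoplus_{v\in Z} i_{v,*}i_{v}^{*}(-) \to$, the complex $R\pi_{*}\Gm[-1]$ on $S$ differs from $j_{!}$ of this extension by skyscraper complexes at the points of $Z$; Raynaud's theory of N\'eron models of Jacobians of fibered surfaces (\cite{MR1717533}, \cite[Theorem 5.2]{MR528839}, \cite[Ch.~9]{BLR}; compare \eqref{rm3}) identifies these in terms of the component lattices $R_{v}$, the component groups $\Phi_{v}$, the finite sheaves $R^{\ge 2}\pi_{*}\Gm$, and the local index and period $\delta_{v},\delta'_{v}$, and likewise identifies the difference between $\mathcal J$ and $j_{!}\mathcal J_{U}$ with the $i_{v,*}\mathcal J_{v}$. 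The bookkeeping is arranged so that, after these corrections, the Jacobian part of $R\pi_{*}\Gm[-1]$ agrees with $\mathcal J[-1]$ up to $[-1]$, and the remaining pairs are: a curve-$S$ piece built from $\Gm_{S}[-1]$, a degree piece built from $\Z_{S}[-2]$, and skyscraper complexes at $Z$.

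On the coherent side, $R\pi_{*}\mathcal O_{X}$ has $H^{0} = \mathcal O_{S}$ and $H^{1} = R^{1}\pi_{*}\mathcal O_{X}$, and the canonical map $\phi\colon R^{1}\pi_{*}\mathcal O_{X} \to \Lie\,\mathcal J$ of \cite[Theorem 4.2]{MR2092767} has torsion kernel and cokernel of equal length, so its cone has zero Euler characteristic and contributes $q^{0}$; the only remaining coherent discrepancy with $R\Gamma(S,\Lie\,\mathcal J)$ is the $\mathcal O_{S}$ in degree $0$, and \eqref{raynaud} — which is $\chi(S,R^{1}\pi_{*}\mathcal O_{X}) = \chi(S,\Lie\,\mathcal J)$ bundled with $\chi(X,\mathcal O_{X}) = \chi(S,\mathcal O_{S}) - \chi(S,R^{1}\pi_{*}\mathcal O_{X})$ and $\dim A - \dim P = \dim B$ — is exactly what reconciles the $q^{\chi(D)}$-factors of the two sides once the $\Gm$-side data is in place. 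This is the only use of \eqref{raynaud}.

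It remains to verify $Q(E,E') = 1$ for each remaining pair. For the curve-$S$ piece $(\Gm_{S}[-1], \mathcal O_{S})$ over $\mathbb F_{q}$ this is the known special-value formula for $\zeta(S,s)$ at $s=1$, which by Lang's theorem ($H^{1}(k,-) = 0$) and \eqref{fq-points} reduces to $P_{1}(P,q^{-1}) = [P(k)]\,q^{-\dim P}$ together with the finiteness of $P(k)$; note that no term of the Artin--Tate or Birch--Swinnerton-Dyer formulas occurs here, so that $[\Br(X)]$, $[\Sha(J/F)]$, $\Delta(\NS(X))$, $\Delta(J(F))$ are never evaluated. For $\Z_{S}[-2]$ the invariant $Q$ again reduces to the zeta value of $S$. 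For the skyscraper complexes at $v\in Z$ — encoding $R_{v}$, $\Phi_{v}$, $R^{\ge 2}\pi_{*}\Gm$ and $\delta_{v},\delta'_{v}$ — one gets $Q = 1$ by an explicit local computation of the same elementary kind as $\lim_{s\to 1}(1-q_{v}^{r(1-s)})/(s-1) = r\log q_{v}$ underlying \eqref{rm2}, the net cancellation being the local identity \eqref{rm3} (Flach--Siebel, via Raynaud), with the finiteness of $R^{\ge 2}\pi_{*}\Gm$ and of the $\Phi_{v}$ making their $\chi_{W}$-contributions cancel between invariants and coinvariants. Multiplicativity of $Q$ and the single shift $[-1]$ then give the claimed identity; since $[\Br(X)]$ and $[\Sha(J/F)]$ sit inside the respective $\chi_{W}$'s and are never evaluated in isolation, the comparison simultaneously forces their Euler factors to match — the promised new proof of Geisser's \eqref{eqn-g}. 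The main obstacle is the construction of the second paragraph: realizing the relation between $R\pi_{*}\Gm$ and the N\'eron model $\mathcal J$, together with all cohomological-flatness, index and period corrections at the bad fibers, as a single distinguished triangle in $D^{b}(S_{\et})$ rather than merely on cohomology sheaves; once that is available the remainder is the formal calculus of $Q$ and the elementary local computations above.
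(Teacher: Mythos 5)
Your formal skeleton is right (multiplicativity of $Q$ over triangles, the odd shift producing the inverse, the first reduction via the Leray triangle for $\pi$, and the use of \eqref{equality}/\eqref{raynaud} to reconcile the coherent sides), and the reduction to $Q(R\pi_{S,*}\Gm[-1],R\Gamma(S,\mathcal O_S))=1$ via the class number formula matches the paper. But the core of the argument --- the chain of distinguished triangles in $D^b(S_{\et})$ connecting $R\pi_*\Gm$ to the N\'eron model $\mathcal J$ --- is exactly the step you leave unconstructed, and the route you sketch for it would not work as stated. Globalizing by the localization triangle $j_!(-|_U)\to(-)\to\bigoplus_{v\in Z}i_{v,*}i_v^*(-)$ produces local terms such as $i_{v,*}i_v^*\mathcal J$ and $i_{v,*}i_v^*\Pic_{X/S}$ whose stalks are sections of the N\'eron model over strictly henselian local rings; these are not finitely generated, so the hypotheses under which $Q$ is even defined (finite generation of $H^*_W$, finiteness of $H^*(-\otimes^L\Z/l\Z)$) fail for the intermediate objects. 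Moreover your claim that the bad-fiber skyscrapers have $Q=1$ ``with the net cancellation being the local identity \eqref{rm3}'' is an assertion, not a proof: \eqref{rm3} is a numerical identity among $c_v$, $\delta_v$, $\delta_v'$ and $\Delta_{\ar}(R_v)$, and you have not exhibited complexes whose $\chi_W$ and leading $Z$-values are those numbers. This also runs against the design of the second proof, which deliberately avoids the term-by-term local bookkeeping of the first.

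The paper's resolution is different and cleaner. It never restricts to $U$ or uses $j_!$: Grothendieck's computation (\emph{Le groupe de Brauer III}, (4.10 bis) and (4.21)) gives a short exact sequence $0\to E\to\Pic_{X/S}\to j_*\Pic_{X_0/F}\to 0$ over all of $S_{\et}$, with $E$ an explicit skyscraper built from the multiple components of the bad fibers; and the N\'eron mapping property enters only through $j_*$, via $0\to\mathcal J\to j_*\Pic_{X_0/F}\to I\to 0$ with $I\subseteq\Z$. The three cones that appear ($E$, $\Z$, $\Z/I$) are each handled by a general theorem (\cite[Theorem 3.1]{GS20b} for the skyscrapers, the class number formula for $\Z$ and $\Gm$ on $S$), with no appearance of $\delta_v$, $\delta_v'$, $c_v$ or $R_v$. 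If you want to complete your proof, replace your second paragraph by these two short exact sequences; the rest of what you wrote then goes through.
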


\begin{proof}
	We have $R^{i} \pi_{\ast} \Gm = 0$ over $S_{\et}$ for all $i \ge 2$
	by \cite[Corollaire (3.2)]{MR244271}.
	Hence we have a distinguished triangle
		\[
				R \pi_{S, \ast} \Gm
			\To
				R \pi_{X, \ast} \Gm
			\To
				R \pi_{S, \ast} \Pic_{X / S}[-1]
			\To
				R \pi_{S, \ast} \Gm[1]
		\]
	in $D(T_{\et})$.%
	\footnote{
		Here $\Pic_{X / S} = R^{1} \pi_{\ast} \Gm$ is only an \'etale sheaf.
		The fppf sheaf denoted by the same symbol is not an algebraic space in general.
	}
	Similarly, we have a distinguished triangle
		\[
				R \Gamma(S, \mathcal{O}_{S})
			\To
				R \Gamma(X, \mathcal{O}_{X})
			\To
				R \Gamma(S, R^{1} \pi_{\ast} \mathcal{O}_{X})[-1]
			\To
				R \Gamma(S, \mathcal{O}_{S})[1].
		\]
	We have $Q(R \pi_{S, \ast} \Gm[-1], R \Gamma(S, \mathcal{O}_{S})) = 1$ 
	by the class number formula
	(\cite[Theorems 9.1 and 9.3]{Gei04}, or
	\cite[Theorems 5.4 and 7.4]{MR2135283} and the functional equation).
	Therefore
		\begin{equation} \label{XtoPicX}
				Q(R \pi_{X, \ast} \Gm[-1], R \Gamma(X, \mathcal{O}_{X}))^{-1}
			=
				Q(R \pi_{S, \ast} \Pic_{X / S}[-1], R \Gamma(S, R^{1} \pi_{\ast} \mathcal{O}_{X})).
		\end{equation}

	For a closed point $v \in S$,
	let $\iota_{v} \colon \Spec k(v) \hookrightarrow S$ be the inclusion.
	For any $i \in G_{v}$, let $k(v)_{i}$ be the algebraic closure of $k(v)$
	in the function field of $\Gamma_{i}$.
	Let $\iota_{v, i} \colon \Spec k(v)_{i} \to S$ be the natural morphism.
	Set
		\[
				E
			=
				\bigoplus_{v \in Z}
					\frac{
						\bigoplus_{i \in G_{v}}
							\iota_{v, i, \ast} \Z
					}{
						\iota_{v, \ast} \Z
					}.
		\]
	Let $j \colon \Spec F \hookrightarrow S$ be the inclusion.
	Then we have a natural exact sequence
		\[
				0
			\To
				E
			\To
				\Pic_{X / S}
			\To
				j_{\ast} \Pic_{X_{0} / F}
			\To
				0
		\]
	over $S_{\et}$ by \cite[Equations (4.10 bis) and (4.21)]{MR244271}
	(where the assumption \cite[Equation (4.13)]{MR244271} is satisfied
	since $k(v)$ is finite and hence perfect for all closed $v \in S$).
	Therefore we have a distinguished triangle
		\[
				R \pi_{S, \ast} E
			\To
				R \pi_{S, \ast} \Pic_{X / S}
			\To
				R \pi_{S, \ast} j_{\ast} \Pic_{X_{0} / F}
			\To
				R \pi_{S, \ast} E[1].
		\]
	Since $E$ is skyscraper,
	we have $Q(R \pi_{S, \ast} E, 0) = 1$
	by \cite[Theorem 3.1]{GS20b} (Step 3 of the proof is sufficient).
	Therefore
		\begin{equation} \label{PicXtoPicXzero}
				Q(R \pi_{S, \ast} \Pic_{X / S}[-1], R \Gamma(S, R^{1} \pi_{\ast} \mathcal{O}_{X}))
			=
				Q(R \pi_{S, \ast} j_{\ast} \Pic_{X_{0} / F}[-1], R \Gamma(S, R^{1} \pi_{\ast} \mathcal{O}_{X})).
		\end{equation}

	Applying $j_{\ast}$ to the exact sequence
		\[
				0
			\To
				J
			\To
				\Pic_{X_{0} / F}
			\To
				\Z
			\To
				0
		\]
	over $\Spec F_{\et}$,
	we obtain an exact sequence
		\[
				0
			\To
				\mathcal{J}
			\To
				j_{\ast} \Pic_{X_{0} / F}
			\To
				\Z
		\]
	over $S_{\et}$.
	Let $I$ be the image of the last morphism, so that we have an exact sequence
		\[
				0
			\To
				\mathcal{J}
			\To
				j_{\ast} \Pic_{X_{0} / F}
			\To
				I
			\To
				0.
		\]
	Then we have distinguished triangles
\begin{align*}
&R \pi_{S, \ast} \mathcal{J}
				\To
					R \pi_{S, \ast} j_{\ast} \Pic_{X_{0} / F}
				\To
					R \pi_{S, \ast} I
				\To
					R \pi_{S, \ast} \mathcal{J}[1],
			\quad\text{and}\\
			&R \pi_{S, \ast} I
				\To
					R \pi_{S, \ast} \Z
				\To
					R \pi_{S, \ast} (\Z / I)
				\To
					R \pi_{S, \ast} I[1].
\end{align*}
	We have $Q(R \pi_{S, \ast} \Z, 0) = 1$ again by the class number formula
	(\cite[Theorems 9.1 and 9.2]{Gei04} or \cite[Theorem 7.4]{MR2135283}).
	Since $\Z / I$ is skyscraper with finite stalks, we have $Q(R \pi_{S, \ast}(\Z / I), 0) = 1$
	by \cite[Theorem~3.1]{GS20b} (Step 2 of the proof is sufficient).
	Therefore
		\begin{equation} \label{PicXzeroToJ}
				Q(R \pi_{S, \ast} j_{\ast} \Pic_{X_{0} / F}[-1], R \Gamma(S, R^{1} \pi_{\ast} \mathcal{O}_{X}))
			=
				Q(R \pi_{S, \ast} \mathcal{J}[-1], R \Gamma(S, R^{1} \pi_{\ast} \mathcal{O}_{X})).
		\end{equation}
The complexes $R \Gamma(S, R^{1} \pi_{\ast} \mathcal{O}_{X})$ and $R \Gamma(S, \Lie\,\mathcal{J})$ have the same Euler characteristic by (\ref{equality}). Hence
		\begin{equation} \label{ROneToLie}
				Q(R \pi_{S, \ast} \mathcal{J}[-1], R \Gamma(S, R^{1} \pi_{\ast} \mathcal{O}_{X}))
			=
				Q(R \pi_{S, \ast} \mathcal{J}[-1], R \Gamma(S, \Lie\,\mathcal{J})).
		\end{equation}
Combining \eqref{XtoPicX}---\eqref{ROneToLie}, we get the desired equality.
\end{proof}

\subsection{A new proof of Geisser's formula}
The above proposition, combined with the results of the previous sections,
also gives a new proof of Theorem \ref{geisser} as follows.

\begin{proof}[Proof of Theorem \ref{geisser}]
	By Proposition \ref{LeadingTerms}, we have
		\[
				\frac{
					P_{2}^{\diamond}(X, 1)
				}{
						[\Br(X)]
					\cdot
						\Delta(\NS(X))
					\cdot
						q^{- \alpha(X)}
				}
			=
				\frac{
					L^{\diamond}(J, 1)
				}{
						[\Sha(J / F)]
					\cdot
						\Delta(J(F))
					\cdot
						c(J)
					\cdot
						q^{\chi(S, \Lie\,\mathcal{J})}
				}.
		\]
	By \eqref{eq1}, we have
		\[
				P_{2}^{\diamond}(X, 1)
			=
					L^{\diamond}(J, 1)
				\cdot
					q^{- \dim B}
				\cdot
					[B(k)]^{2}
				\cdot
					Q_{2}^{\diamond}(1),
		\]
	where $Q_{2}^{\diamond}(1)$ is the leading coefficient of the $(1 - q^{-s})$-adic expansion of $Q_{2}(s + 1)$.
	By \eqref{rm4} and \eqref{rm3}, we have
		\[
				\Delta(\NS(X))
			=
					\frac{
						\alpha^{2} \delta^{2}
					}{
						\prod_{v \in Z} \delta_{v}' \delta_{v}
					}
				\cdot
					\Delta(J(F))
				\cdot
					c(J)
				\cdot
					[B(k)]^{2}
				\cdot
					Q_{2}^{\diamond}(1).
		\]
	By \eqref{raynaud}, we have
		\[
				q^{- \alpha(X)}
			=
					q^{\chi(S, \Lie\,\mathcal{J})}
				\cdot
					q^{- \dim B}.
		\]
	Taking a suitable alternating product of these four equalities,
	we obtain \eqref{eqn-g}.
\end{proof}


\end{document}